\documentclass[leqno,12pt,a4paper]{article}
\usepackage[a4paper]{geometry}% vazno - za dimenziju papira u pdflatexu, inace graphicx
\usepackage{xcolor}
\usepackage{graphicx, xcolor}
\usepackage{tikz}
\usepackage{epstopdf}
\usepackage[utf8]{inputenc}
\usepackage[toc,page]{appendix}
\usetikzlibrary{arrows}
\usepackage[linesnumbered,ruled]{algorithm2e}
\usepackage{multicol,tcolorbox}
\usepackage{subcaption}
\usepackage{algpseudocode}
\usepackage{bm}

\usepackage{hyperref}
\hypersetup{
    colorlinks=true,
    linkcolor=blue,
    filecolor=magenta,      
    urlcolor=cyan,
}

\usepackage{amsfonts,amssymb,amsthm,amsmath,amsbsy,bm}

\usepackage[english]{babel}
\usepackage{csquotes}
\usepackage[backend=biber, giveninits=true, url=false, doi=false, eprint=false, isbn=false]{biblatex}
\DeclareFieldFormat[article]{title}{#1}
\DeclareFieldFormat[inproceedings]{title}{#1}
\DeclareFieldFormat[incollection]{title}{#1}
\DeclareFieldFormat[article]{pages}{#1}
\usepackage[textsize=tiny,color=green!40]{todonotes}	% Comments in the box

\def\mA{{\bf{A}}}
\def\mM{{\bf{M}}}
\def\mI{{\bf{I}}}
\def\Lb#1{{{\rm L}^\infty(#1)}}
\def\vu{{\sf u}}
\def\vw{{\sf w}}

\def\vH{{\sf H}}
\def\vp{{\sf p}}

\def\vv{{\sf v}}
\def\vz{{\sf z}}
\def\Hmj#1{{{\rm H}^{-1}(#1)}}

\def\lK{{\mathcal K}}
\def\lA{{\mathcal A}}
\def\lL{{\mathcal L}}

\def\pL#1#2{{{\rm L}^{#1}(#2)}}

\def\Hj#1{{{\rm H}^{1}(#1)}}
\def\Hmj#1{{{\rm H}^{-1}(#1)}}
\def\Kth{\lK(\theta)}
\def\Kthn{\lK(\theta_n)}
\def\eps{\varepsilon}

\newcommand{\lO}{\mathcal O}

\DeclareMathOperator{\MAP}{\Phi_\psi}
\DeclareMathOperator{\sh}{{\rm sh}}
\DeclareMathOperator{\ch}{{\rm ch}}

\definecolor{minered}{RGB}{139, 1, 1}
\definecolor{minegrey}{RGB}{150, 150, 150}

\DeclareMathOperator{\I}{\hspace{-0.05em}\text{\bf I}\hspace{-0.05em}}

\DeclareMathOperator{\ddiv}{div}
\DeclareMathOperator{\HH}{H}

\DeclareMathOperator{\ID}{Id}

\DeclareMathOperator{\WW}{W}

\let\dv\ddiv
\newcommand{\R}{\mathbb R}
\newcommand{\N}{\mathbb N}

\newcommand{\mx}{{\bm x}}

\newcommand{\mn}{{\bm n}}
\newcommand{\mr}{{r}}

\newcommand{\ssubset}{\subset\joinrel\subset}
\newcommand{\JJ}{\mathcal J}

\newcommand{\mxi}{{\bm \xi}}
\newcommand{\my}{{\bm y}}
\newcommand{\mz}{{\bm z}}
\newcommand{\Kthtil}{\lK(\tilde\theta)}

\newcommand{\Lj}[1]{{{\rm L}^1(#1)}}
\newcommand{\Ld}[1]{{{\rm L}^2(#1)}}
\newcommand{\Ldl}[1]{{{\rm L}_{\rm loc}^2(#1)}}
\newcommand{\Hjn}[1]{{{\rm H}_0^{1}(#1)}}
\newcommand{\Cbc}[1]{{\rm C}^\infty_c(#1)}
\newcommand{\supp}{\operatorname{supp}}
\newcommand{\vnul}{{\sf 0}}

\newtheorem{theorem}{Theorem}[section]

\newtheorem{example}[theorem]{Example}

\newtheorem{lemma}[theorem]{Lemma}
\newtheorem{prop}[theorem]{Proposition}

\newtheorem{remark}[theorem]{Remark}
\newtheorem{algorithmm}[theorem]{Algorithm}
\definecolor{redd}{RGB}{255,0,0}

\begin{document}
% title
\title{Hybrid Optimization Techniques for Multi-State Optimal Design Problems}
%Hybrid numerical method based on homogenization and shape derivative
\date{}

\author{Marko Erceg\footnote{Department of Mathematics, Faculty of Science, University of Zagreb, Croatia, \texttt{maerceg@math.hr}}, Petar Kun\v stek\footnote{Department of Mathematics, Faculty of Science, University of Zagreb, Croatia, \texttt{petar@math.hr}}, Marko Vrdoljak \footnote{Department of Mathematics, Faculty of Science,  University of Zagreb, Croatia, 
\texttt{marko@math.hr}} }

\maketitle

\begin{abstract}

This paper addresses optimal design problems governed by multi-state stationary diffusion equations, 
aiming at the simultaneous optimization of the domain shape and the distribution of two isotropic 
materials in prescribed proportions. Existence of generalized solutions is established via a hybrid 
approach combining homogenization-based relaxation in the interior with suitable restrictions on admissible domains. 

Based on this framework, we propose a numerical method that integrates homogenization and shape optimization. 
The domain boundary is evolved using a level set method driven by the shape derivative, while the interior 
material distribution is updated via an optimality criteria algorithm. The approach is demonstrated on a representative example.
\end{abstract}

\noindent
{\bf Keywords}: optimal design, homogenization, optimality conditions, shape derivative, Hausdorff distance

\noindent
{\bf Mathematics Subject Classification 2010}: 49Q10, 49K20, 65N30, 80M50

\section{Introduction}

Shape optimization refers to the determination of an optimal domain that minimizes a prescribed objective functional, usually defined by the solution of a partial differential equation (PDE) known as the state equation. As such, shape optimization can be considered as a branch of distributed control theory, where the domain acts as a control variable.

A fundamental difficulty in this field lies in the nonexistence of classical optimal shapes. Minimizing sequences of domains tend to develop oscillations, holes, or degeneracies, so that no Lipschitz domain attains the infimum of the cost functional. This nonexistence has profound implications: numerical algorithms may fail to converge or depend strongly on the initial guess. Two principal strategies have emerged to address this: (i) restricting the class of admissible domains to ensure compactness and existence, or (ii) relaxing the problem by enlarging the admissible set  where existence can be recovered.

The homogenization theory plays a central role in obtaining this relaxation \cite{MT85, KS}, by introducing a notion of generalized  designs that ensures the optimization problem remains well-posed and physically meaningful. This, in turn, leads to more stable and efficient numerical computations of relaxed designs \cite{BendKik,All02,BS03}.

On the other hand, imposing additional (uniform) regularity assumptions on the class of admissible domains can ensure the existence of solutions for certain shape optimization problems. Examples include the uniform cone condition \cite{Chenais, P84}, a bounded number of connected components of the complement in two-dimensional cases \cite{Sverak}, perimeter constraints \cite{AB93}, uniform continuity conditions \cite{LNT02}, the uniform cusp property \cite{DZ}, and uniform boundedness of the density perimeter \cite{BZ96}. Further theoretical insights are provided in \cite{SZ92,HP18,BB05,NTS06}.

This theoretical development was accompanied by a numerical treatment of the problem using shape calculus, building on the pioneering work of Hadamard \cite{MS76, P84,SZ92,AConc}. 
Numerical methods in shape optimization differ mainly in how they represent geometry. Some approaches, such as the level set and phase-field methods, describe shapes implicitly using 
a scalar field that marks the region occupied by the material.  Other approaches represent shapes explicitly through a computational mesh or CAD model, allowing for accurate analysis but 
requiring frequent remeshing or geometric updates as the shape evolves.

%Numerical methods in the literature are mostly concerned with determining an unknown domain occupied by a given material. 
%Such problems can, beside shape derivative techniques, be effectively addressed using the homogenization method, which accounts for fine mixtures of the original material and voids. 

In this paper, we consider the problem of determining not only the optimal distribution of two phases but also the optimal shape of the region they occupy. 
The problem has been addressed in the literature using alternative approaches. Extensions of the SIMP scheme to multiple phases were proposed in 
\cite{ST97,GS00}, considering three-phase mixtures (two materials plus void). In \cite{WW04, ADDM14}, the level set method was applied to multi-phase 
shape optimization, also allowing one of the phases to represent void. 

Motivated by the robustness of the homogenization method, our approach combines homogenization-based relaxation  within the domain with shape optimization based on the shape derivative of the overall region. 
 For this reason, we refer to it as a hybrid method.
% The proposed method offers potential advantages due to the robustness of the 
%homogenization approach \cite{All02}, which provides a proper relaxation of the original problem. 
Moreover, the optimality criteria method is known 
to produce accurate approximations within only a few iterations, which  facilitates the application of the shape derivative method 
to the external boundary. 

The resulting optimal design can, through penalization techniques, be reduced to a classical design with pure phases, or it can serve as an informed initial guess for the subsequent application of the shape derivative method to the distribution of the two phases within the “optimal” domain.
In the numerical example presented in the final section, the penalization step was not implemented, since the optimal design produced by the proposed method turned out to be classical.
This is due to the radial symmetry of the problem: the resulting design is known to be optimal for the fixed-domain problem posed on the resulting domain \cite{KunVrd20}.

We now proceed to formulate the specific 
two-material optimal design problem under consideration.

Let $D\subset\R^d$, $d\geq 2$, be a bounded open fixed set and  $\Omega \subset D$ a Lipschitz domain. The open set $\Omega$ is made of two materials with isotropic conductivities $0<\alpha<\beta$, so its overall conductivity is given by
\begin{equation}\label{A_clas}
\mA = \chi\alpha \mI +(1-\chi)\beta \mI,
\end{equation}
where $\chi\in\Lb{\Omega;\{0,1\}}$  represents the characteristic function of the less-conductive phase. For different right-hand sides $f_1,\ldots f_m\in \Ld D$ we denote  the corresponding temperatures $\vu:=(u_1,\ldots, u_m)\in\Hj{\Omega;\R^m}$ of the body, as the unique solutions of the following boundary value problems
\begin{equation}\label{state1}%\tag{S}
 \left\{ \begin{array}{cc} -\dv(\mA \nabla u_i)=f_i& \text{ in }\Omega\\ 
u_i=h_i &\text{ on }\Gamma_0\\
u_i=0 &\text{ on }\Gamma_1\\
 \mA\nabla u_i\cdot \mn=0 &\text{ on }\Gamma_2. \end{array} \right.
\end{equation}

Here, the boundary $\partial\Omega$ consists of three  parts: $\partial \Omega=\Gamma_0\dot\cup\Gamma_1\dot\cup\Gamma_2$, where 
$\Gamma_0$ is relatively open and fixed, and $h_i\in \HH^{\frac12}(\Gamma_0)$
are prescribed.
The sets $\Gamma_1$ and $\Gamma_2$ represent subsets of 
$D$ on which the Dirichlet and Neumann boundary conditions, 
respectively, can be applied.
Note that simply requiring $h_i \in \HH^{\frac12}(\Gamma_0)$ is insufficient 
to ensure well-posedness; their choice must be consistent with the 
other boundary conditions, particularly the homogeneous Dirichlet 
condition imposed on the segment $\Gamma_1$.
We address this specification separately within each of the three settings in Section \ref{sec:relax}.
%For instance, if $\Gamma_2$ is empty, we would take $h_i$ from the space $\HH^{\frac12}_{00}(\Gamma_0)$ (see \cite{LM72}), and if $\Gamma_1$ is empty, the space $\HH^{\frac12}(\Gamma_0)$
%is appropriate. In the general case, we impose more specific assumptions on the possible choices of $\Gamma_1$ and $\Gamma_2$, and, consequently, on the functions $h_i$.

\begin{remark}
The functions $f_i$, for $i=1,\dots,m$, could potentially belong to a broader space, specifically the dual of the space $\bigl\{v\in\Hj D : v|_{\Gamma_0\cup\Gamma_1}=0\bigr\}$, where the restriction is naturally understood in the sense of traces. However, for the sake of simplicity in the subsequent analysis, and anticipating the even stricter requirements of the numerical application to follow, we shall work with a subspace consisting of square integrable functions on $D$ (i.e.~$f_i \in \Ld D$).
\end{remark}

 The aim is to find a domain $\Omega$, such that $\Gamma_0\subseteq\partial\Omega$, and a  characteristic function $\chi$ that minimizes the objective functional 
\begin{align*}
{\mathcal I}(\Omega,\chi)=\int\limits_\Omega  \chi(\mx ) g_{\alpha}(\mx ,\vu(\mx ))+ (1-\chi(\mx )) g_{\beta}(\mx ,\vu(\mx )) \,\mathrm{d}\mx .
\end{align*} 

The amount of the first phase is also given:
\begin{align*}
    \int\limits_{\Omega}\chi \,\mathrm{d}\mx = q_\alpha,
\end{align*}
as well as the volume (i.e.~the Lebesgue measure) of the domain: $|\Omega|=V$.
There is no definitive reason to expect minimizers; however, if they do exist, we refer to this as a classical design.

We provide a brief overview of the paper. In Section \ref{sec:relax}, we introduce a hybrid approach for establishing the existence of a 
(generalized) solution. Relaxation via the homogenization method is applied within the domain, while the class of admissible domains is suitably restricted. 
We first examine the cases where either $\Gamma_1$ or $\Gamma_2$ is empty, and then extend the analysis to the general case by combining the techniques 
developed for these two settings. Section \ref{sec:algorithm} presents the numerical approach corresponding to this hybrid framework. The optimality criteria 
method is applied inside the domain, while the domain boundary is updated using the shape derivative of the cost functional. Section \ref{sec:num_ex} 
illustrates the approach with a numerical example.

\section{Relaxation}\label{sec:relax}

\subsection{Preparation}

If $\Omega$ is fixed, achieving proper relaxation involves introducing  generalized materials, which are mixtures of two original phases on a micro-scale, introduced by Murat and Tartar \cite{MT85}. In this relaxation process,  characteristic functions $\chi\in\Lb{\Omega;\{0,1\}}$ are replaced by 
local fractions $\theta\in\Lb{\Omega;[0,1]}$, and \eqref{A_clas} by $\mA\in\Kth$ almost everywhere on $\Omega$, where $\Kth$ denotes the set of all possible effective conductivities  obtained by mixing original phases with local fraction $\theta$. Under  suitable growth conditions on $g_\alpha$ and $g_\beta$ (see (A5) below) the relaxed problem reads
\begin{equation}\label{HomogRelax}
   \left\{ \begin{array}{cc}
         {\JJ}(\Omega,\theta,\mA)=\int_\Omega  \theta(\mx ) g_{\alpha}(\mx ,\vu(\mx ))+ (1-\theta(\mx )) g_{\beta}(\mx ,\vu(\mx )) \,\mathrm{d}\mx \to \min \\[0.2cm]
          \theta\in\Lb{\Omega;[0,1]}, \, \int_\Omega\theta\,d\mx=q_\alpha, \, \mA\in \Kth \text{ a.e.~in } \Omega, \,\vu \text{ solves \eqref{state1} }.
    \end{array}\right.
\end{equation}

The determination of the set $\Kth$ is reffered to as the G-closure problem (see \cite[Subsection 2.1.2]{All02}). For mixtures of two isotropic phases, this problem admits 
a complete characterization: $\Kth$ consists of all symmetric matrices whose eigenvalues satisfy the following inequalities 
(see \cite{MT85,LC86}) 
\begin{equation}\label{eq:Gclosure}\displaystyle
   \left\{\quad \begin{array}{cc}
         \displaystyle\lambda_\theta^- \leq \lambda_j \leq \lambda_\theta^+\quad j=1,\hdots,d   \\
         \displaystyle \sum_{j=1}^d \frac{1}{\lambda_j -\alpha} \leq \frac{1}{\lambda_\theta^--\alpha}+ \frac{d-1}{\lambda_\theta^+-\alpha}\\
         \displaystyle \sum_{j=1}^d \frac{1}{\beta-\lambda_j} \leq \frac{1}{\beta-\lambda_\theta^-}+ \frac{d-1}{\beta-\lambda_\theta^+}
    \end{array}\right.
\end{equation}
where $\lambda_\theta^-=(\theta/\alpha+(1-\theta)/\beta)^{-1}, \: \lambda_\theta^+=\theta\alpha+(1-\theta)\beta.$ 
This precise characterization of $\Kth$ is relevant only for the numerical approach and is not required for the  results presented in this section. 
These results equally apply to the case of mixing more than two materials, including anisotropic ones \cite{Ta1, Ta2}.

To obtain existence of optimal $\Omega$, we shrink the set of admissible domains, by assuming the well-known cone condition.

For  $\my,\mxi\in \R^d$, $\mxi$ a unit vector, and $\varepsilon >0$ we denote by $C(\my,\mxi,\varepsilon)$ the cone of vertex $\my$, of direction $\mxi$ and dimension $\varepsilon$, i.e. 
  \begin{equation}\label{eq:eps-cone}
  C(\my,\mxi,\varepsilon)=\left\{ \mz\in\R^d :\begin{array}{cc}
      (\mz-\my) \cdot \mxi \geq |\mz-\my|\cos\varepsilon ,  \\
        0<|\mz-\my|<\varepsilon
  \end{array}\right\} \,.
  \end{equation}
  An open set $\Omega$ is said to have \emph{$\varepsilon$-cone property} if (v. Figure \ref{fig:eps-cone})
  \[
  (\forall \mx\in\partial\Omega)(\exists\mxi_\mx\in\R^d, |\mxi_\mx|=1)(\forall \my\in\overline\Omega\cap B(\mx,\varepsilon)) \quad C(\my,\mxi_\mx,\varepsilon)\subset\Omega \,.
  \]
\begin{figure}
    \centering
    \includegraphics[width=0.4\textwidth]{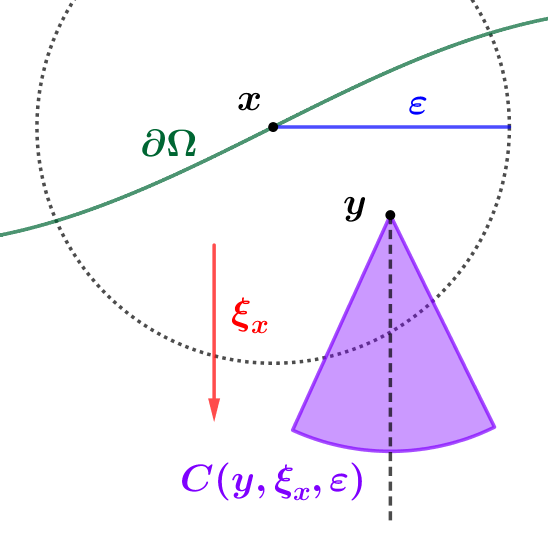}
    \caption{The $\eps$-cone property of a set}
    \label{fig:eps-cone}
\end{figure}%
Here $B(\mx,\varepsilon)$ denotes the open ball centred at $\mx$
with radius $\varepsilon$, while $\overline{\Omega}$ stands for the closure of the set $\Omega$.
This property, when considered for a single domain $\Omega$, is commonly referred to as the uniform cone property \cite{Grisvard}. However, in problems involving a family of domains $\Omega$, emphasizing the uniformity of the parameter $\varepsilon$ across the entire family is crucial, which makes the term $\varepsilon$-cone property well-established \cite{HP18}. It is a well-known result that the $\varepsilon$-cone property is equivalent to the (uniform) Lipschitz condition on the boundary of the domains \cite{Chenais} (see also \cite[Theorem 2.4.7]{HP18}).

Let us now introduce the set of relaxed admissible designs. We begin by addressing the standing assumptions that are required to define these designs. For the reader's convenience, we will also use the context of these assumptions to fix and define some notation that will be used throughout the paper.
\begin{itemize}
\item[(A1)] $D\subset \R^d$ is an open and bounded set (reference domain), where $d\geq 2$ is a fixed integer.
\item[(A2)] $0<\alpha<\beta$ (material conductivities), $V>0$ (volume constraint), $q_\alpha\in (0,V)$ (quantity of the first phase), $\eps>0$ (regularity of domains) are fixed.
\item[(A3)] Let $M \subset \overline{D}$ be a fixed, compact $(d-1)$-dimensional Lipschitz manifold with boundary. We define $\Gamma_0$ 
as the manifold interior of $M$, namely $\Gamma_0 = M \setminus \partial M$, where $\partial M$ denotes the manifold boundary of $M$. 

Then, the set $\Gamma_0$ admits a finite decomposition into connected components $\Gamma_0^i$, $i=1,2,\dots, N$, whose closures are 
mutually disjoint: for any $i,j\in\{1,2,\dots,N\}$, $i\neq j$, we have $\overline{\Gamma_0^i}\cap\overline{\Gamma_0^j}=\emptyset$.
{ In particular, each $\Gamma_0^i$ is an open $(d-1)$-dimensional Lipschitz manifold whose boundary 
$\partial\Gamma_0^i=\overline{\Gamma_0^i}\setminus \Gamma_0^i$ consists precisely of those points at which the component meets the manifold boundary
$\partial M$.}
%Moreover, we require that $\Gamma_0$ has finitely many connected components such that their closures are mutually disjoint.
%$\Gamma_0\subset\R^d$ has finitely many connected components such that their closures are mutually disjoint, i.e.~if $\Gamma_0^i$, $i=1,2,\dots,N$, are connected components of $\Gamma_0$, then for any $i,j\in\{1,2,\dots,N\}$, $i\neq j$, we have $\overline{\Gamma_0^i}\cap\overline{\Gamma_0^j}=\emptyset$.
\item[(A4)] There exists an open and connected $\widehat\Omega\subseteq D$ satisfying the $\eps$-cone property, $|\widehat\Omega|=V$, and $\Gamma_0\subseteq\partial\widehat\Omega$,
where $|\widehat\Omega|$ represents the Lebesgue measure of the set $\widehat\Omega$.  
\end{itemize}

The admissible set, i.e.~the set of \emph{admissible designs}, 
is then defined as
\begin{equation}\label{assump1}
\begin{aligned}
      {\mathcal{A}} =\Biggl\{&(\Omega,\theta,\mA): \Omega\subseteq D \text{ is open, connected and satisfies the $\varepsilon$-cone property}, \\
        & |\Omega|=V, \; \Gamma_0\subset \partial\Omega, \; \theta\in\Lb{\Omega;[0,1]}, \; \int_\Omega \theta\, d\mx =q_\alpha, \; \mA\in \Kth \: \text{ a.e. in }\Omega\Biggr\} \,.
\end{aligned}
\end{equation}
The set $\mathcal{A}$ depends on all parameters introduced in (A1)--(A3), but for simplicity we shall not include that explicit dependence in the notation. 
Note that by (A4) the set of admissible designs $\mathcal{A}$ is non-empty. Indeed, $(\widehat{\Omega},\chi,\mA)$, with characteristic function  $\chi$ on $\widehat{\Omega}$ satisfying 
constraint on the amount of the first phase, and $\mA$ given by \eqref{A_clas} belongs to $\mathcal{A}$.
Let $\mathcal{O}$ denote the projection onto the first component 
of $\mathcal{A}$, i.e.~$\Omega \in \mathcal{O}$ if and only if 
there exist $\theta$ and $\mathbf{A}$ such that 
$(\Omega, \theta, \mathbf{A}) \in \mathcal{A}$. 
The elements of $\mathcal{O}$ will be referred to as \emph{admissible domains}.

The relaxed optimal design problem can be stated as follows:
\begin{equation}\label{RelaxedProblem}
   \left\{ \begin{array}{ll}
         {\JJ}(\Omega,\theta,\mA)\to \min \\[0.2cm]
          (\Omega,\theta,\mA)\in{\mathcal{A}},\text{ and } \vu \text{ solves \eqref{state1} with }\mA.
    \end{array}\right.
\end{equation}

The functions $g_\alpha$ and $g_\beta$ are subject to the following assumptions.
\begin{itemize}
	\item[(A5)]  $g_{\alpha},g_{\beta}$ are Carathéodory functions on $D\times\R^m$ (measurable in $\mx$ for every $\vu$, and continuous in $\vu$ for almost every $\mx$) satisfying the growth condition
	\begin{equation*} %\label{assump2}
	|g_{\gamma}(\mx,\vu)|\leq \varphi_{\gamma}(\mx)+\psi_{\gamma}(\mx)|\vu |^q, \quad \gamma=\alpha,\beta, 
	\end{equation*}
	for some $q\in[1,q^\ast\rangle$ and $\varphi_{\gamma}\in\pL {s_1}D$, $\psi_{\gamma}\in\pL {s_2}D$, 
	where $s_1>1$ and
	\begin{equation*} %\label{eq:qast}
	q^\ast=\left\{\begin{array}{rl} 
	+\infty, & d= 2\\ 
	\frac{2d}{d-2}, & d>2\,,
	\end{array}\right.
	\qquad
	s_2>\left\{\begin{array}{rl} 
	1, & d= 2\\ 
	\frac{q^\ast}{q^\ast-q}, & d>2\,.
	\end{array}\right. \,
	\end{equation*}
\end{itemize}

In the rest of this section, we prove the existence of a solution to the relaxed problem ($\ref{RelaxedProblem}$), under a suitable assumption on $h_i$. The analysis is structured into three steps related to the boundary conditions of the state problem, as defined by the boundary decomposition in $\eqref{state1}$. We first study the pure Dirichlet problem, where $\Gamma_2 = \emptyset$, then we examine the mixed problem with a fixed Dirichlet part, where $\Gamma_1 = \emptyset$, and as a final step, we address the general setting, which covers both previously addressed cases. We present these three distinct settings sequentially to enhance accessibility and guide the reader step-by-step through the technical difficulties. This approach keeps the proofs reasonably short, as in the second and third cases, we emphasize only the new technical components without repeating the parts that carry over from the preceding analysis.

A final technical result in this section concerns the geometry of $\Gamma_0$.
\begin{lemma}\label{lem:O}
	Let us assume {\rm (A1)}--{\rm (A4)}. There exist $N$ open and bounded disjoint subsets $O^i\subset \R^d$, $i=1,2,\dots, N$, such that for every $i\in\{1,2,\dots,N\}$ the following hold:
	\begin{itemize}
		\item[{\rm i)}] $\Gamma_0^i\subset O^i$ and $\partial\Gamma_0^i\subset \partial O^i$;
		\item[{\rm ii)}] $O^i = O^i_- \,\dot\cup\, \Gamma_0^i \,\dot{\cup}\, O^i_+$, where $O^i_\pm$ are { connected} open sets;
		\item[{\rm iii)}] The boundary of any admissible domain $\Omega\in\mathcal{O}$ satisfies $\partial\Omega\cap O^i = \Gamma_0^i$, where $\mathcal{O}$ is the set of admissible domains defined in the above.
	\end{itemize}
\end{lemma}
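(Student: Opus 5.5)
The idea is to build each $O^i$ as a thin two-sided "collar" around $\Gamma_0^i$, with thickness controlled by $\eps$. The $\eps$-cone property will then force every admissible domain to lie on one prescribed side of $\Gamma_0^i$ inside that collar.

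\emph{Step 1: choose the normal direction.} Fix $\widehat\Omega$ from (A4). Since the $\eps$-cone property is uniform over $\mathcal O$, every admissible $\Omega$ with $\Gamma_0\subset\partial\Omega$ has, at each $\mx\in\Gamma_0$, a cone $C(\my,\mxi_\mx,\eps)\subset\Omega$ for all $\my\in\overline\Omega\cap B(\mx,\eps)$. Hence $\Omega$ lies locally on one side of $\Gamma_0$. The first thing to show is that this side is the same for all $\Omega\in\mathcal O$.

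Since $\Gamma_0^i$ is a Lipschitz manifold that is part of the boundary of an $\eps$-cone domain, locally around $\mx$ it is the graph of a Lipschitz function $\phi$ over the hyperplane $\mxi_\mx^\perp$, with Lipschitz constant bounded by $\cot\eps$ (Chenais). The graph splits a cylinder into an upper part and a lower part. Any $\Omega\in\mathcal O$ containing $\Gamma_0^i$ in its boundary must contain cones at points of $\Gamma_0^i$. Such a cone lies on one side, and the cone direction of $\Omega$ must be transversal to the graph. The key observation is that if $\Omega$ occupied the "wrong" side, the opposite cone from $\widehat\Omega$ would fit as well. This is not a contradiction by itself, since two-sided neighbourhoods are allowed. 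So the side is not forced, and I define $O^i_\pm$ as the two sides of $\Gamma_0^i$ and prove only iii), which does not require a fixed side.

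\emph{Step 2: construct $O^i$.} Cover the compact set $\overline{\Gamma_0^i}$ by finitely many cylinders of size comparable to $\eps$ in which $\Gamma_0^i$ is a Lipschitz graph. Take $O^i$ to be the union of the open "tubes" $\{\mz:\operatorname{dist}(\mz,\Gamma_0^i)<\delta\}$ intersected with the cylinders over $\Gamma_0^i$, and shave the tube near $\partial\Gamma_0^i$ so that $\partial\Gamma_0^i\subset\partial O^i$. Because the closures of the $\Gamma_0^i$ are disjoint, a small $\delta$ gives disjoint $O^i$. Removing the graph leaves exactly two components $O^i_\pm$, each connected because $\Gamma_0^i$ is connected. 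This gives i) and ii).

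\emph{Step 3: prove iii).} Let $\Omega\in\mathcal O$ and suppose there is $\mz\in\partial\Omega\cap O^i_+$. Near $\Gamma_0^i$, the set $\Omega$ contains cones of size $\eps$ with vertices on $\Gamma_0^i$. If $\delta\ll\eps$, the whole collar on the side where $\Omega$ lives is filled by these cones, so the boundary cannot enter that side. On the opposite side, the cone at $\mx\in\Gamma_0^i$ evaluated at vertices $\my\in\overline\Omega$ near $\mx$ is uniformly transversal. If $\Omega$ also had points near $\mz$ on the other side, the cones from those points would cross $\Gamma_0^i\subset\partial\Omega$. This contradicts $C\subset\Omega$ for an open set whose boundary contains $\Gamma_0^i$, unless both sides belong to $\Omega$. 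In that case $\Gamma_0^i$ would be interior to $\overline\Omega$, which the cone condition forbids, since the cone property implies $\Omega=\operatorname{int}\overline\Omega$.

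The main obstacle is making the choice of $\delta$, depending only on $\eps$ and the Lipschitz character of $M$, uniform over all of $\mathcal O$. This matters especially near $\partial\Gamma_0^i$, where the collar must be tapered while still keeping $\partial\Gamma_0^i\subset\partial O^i$. There I would shrink the tube proportionally to the distance to $\partial\Gamma_0^i$, giving a lens shape, and check that the cones still sweep it.
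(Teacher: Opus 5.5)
\paragraph{Main gap: Step 3 assumes iii) rather than proving it.}
Two of your claims carry the argument. The first is cone-filling: every collar point on the side where $\Omega$ lives lies in some $C(\my,\mxi_\mx,\eps)$ with $\my\in\Gamma_0^i$. The second is cone-crossing: the axis $\mz+t\mxi_\mx$ from a boundary point $\mz$ on the other side meets $\Gamma_0^i$. Both need the line through $\mz$, in the $\Omega$-dependent direction $\mxi_\mx$, to actually hit $\Gamma_0^i$. It could instead hit some other part of $\partial\Omega$ that continues beyond $\partial\Gamma_0^i$. The inclusion $\Gamma_0^i\subset\partial\Omega$ alone gives no such control at a scale independent of $\Omega$, and this is exactly what you postpone as ``the main obstacle''.

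The missing idea is a clopen (connectedness) argument on the uniform Lipschitz graph, and the paper's choice of $\delta$ is built for it. You already have the ingredient in Step 1: every $\Omega\in\mathcal O$ has Lipschitz boundary with parameters $L,a,r$ depending only on $\eps$. So around $\mx\in\Gamma_0^i$, inside a cylinder of radius $r$ and height $a$, $\partial\Omega$ is the graph of an $L$-Lipschitz function.
\begin{itemize}
\item Take $\eta_\mx=\delta\min\{1,d(\mx,\partial\Gamma_0^i)\}$ with $\delta=\min\{r,a,1\}/\sqrt{1+L^2}$. Let $G$ be the part of the graph lying over the base disk of radius $\eta_\mx$.
\item $G$ is connected, lies in the cylinder, and stays at distance less than $d(\mx,\partial\Gamma_0^i)$ from $\mx$.
\item $\Gamma_0^i\cap G$ is relatively open in $G$ (invariance of domain, both sets being $(d-1)$-manifolds). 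It is relatively closed in $G$, because $\overline{\Gamma_0^i}\setminus\Gamma_0^i=\partial\Gamma_0^i$ misses $G$. It contains $\mx$. Hence $G\subseteq\Gamma_0^i$.
\item Every point of $\partial\Omega\cap B(\mx,\eta_\mx)$ lies on $G$. Therefore $\partial\Omega\cap B(\mx,\eta_\mx)=\Gamma_0^i\cap B(\mx,\eta_\mx)$.
\end{itemize}
This gives iii) for $O^i=\bigcup_{\mx\in\Gamma_0^i}B(\mx,\eta_\mx)$, with no case distinction on which side $\Omega$ occupies and no cone-sweeping. Your idea of tapering the collar in proportion to the distance to $\partial\Gamma_0^i$ is the right one. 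What it lacks is the factor $\sqrt{1+L^2}$ and this clopen argument.

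\paragraph{Secondary point on ii).}
Connectedness of $\Gamma_0^i$ alone does not give ``exactly two components''. A connected Lipschitz hypersurface can be one-sided: an open M\"obius band in $\R^3$ satisfies (A3), and its collar minus $\Gamma_0^i$ is connected. Two-sidedness comes from (A4). Once iii) holds for $\widehat\Omega$, which you introduce in Step 1 and then drop, the natural choice is $O^i_+=O^i\cap\widehat\Omega$ and $O^i_-=O^i\setminus\overline{\widehat\Omega}$. Connectedness of each piece still has to be checked, for instance by chaining connected one-sided local pieces along the connected $\Gamma_0^i$; the paper is equally brief on this point. Your disjointness argument, via the positive distance between the compact sets $\overline{\Gamma_0^i}$, is fine.
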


\begin{proof}
%Let us present only a sketch of the construction of sets $O^i$. 
By \cite{Chenais} (see also \cite[Theorem 2.4.7]{HP18}) any admissible 
domain $\Omega\in\mathcal{O}$ has a Lipschitz boundary. More precisely, in terms of \cite[Definition 2.4.5]{HP18}, the parameters in the definition of a Lipschitz boundary are $L,a,r\in (0,\infty)$, and they depend only on $\eps$.
We define
\begin{equation*}
\delta=\delta(\eps) := \frac{\min\{r,a,1\}}{\sqrt{1+L^2}} \,.
\end{equation*}

Let us take an arbitrary $i\in\{1,2\dots,N\}$. Then we define
\begin{equation*}
O^i := \bigcup_{x\in \Gamma_0^i} B(x,\eta_x) \,,
\end{equation*}
where $\eta_x=\delta\min\{1,d(\partial\Gamma_0^i,x)\}$. %, and $B(x,\eta_x)$ denotes the open ball around $x$ with radius $\eta_x$. 
Then clearly we have that $O^i$ is open and that i) is satisfied. 

Using the definition of a Lipschitz boundary \cite[Definition 2.4.5]{HP18} it is easy to show that for any admissible domain $\Omega\in\mathcal{O}$ we have 
$\partial\Omega \cap B(x,\eta_x) = \Gamma_0^i \cap B(x,\eta_x)$ (here the choice of $\delta$ is fundamental).
This immediately implies that iii) is fulfilled. 

This completes the proof. 
\end{proof}

An immediate consequence of the previous lemma is that $\Gamma_0$ is relatively
open in $\partial\Omega$ for any admissible domain $\Omega\in\mathcal{O}$.

\begin{remark}\label{rem:O}
The interpretation of the open set $O:=\cup_{i=1}^N O^i$ is that the boundary of all admissible domains $\Omega\in\mathcal{O}$ is fixed within $O$. 
Hence, { $\partial\Omega\setminus\Gamma_0$} is contained in $\overline{D}\setminus O$. One could impose such a condition in the definition of $\mathcal{O}$, but the previous lemma precisely asserts that this is implicitly implied by the existing assumptions and is therefore not required as an a priori condition.		
However, we cannot claim that the entire admissible domain $\Omega$ is fixed within $O$. Instead, for any given $i \in \{1, 2, \dots, N\}$, 
the domain $\Omega$ intersects $O^i$ in one of two specific ways: 
\begin{equation*}
\Omega\cap O^i = O^i_- \quad \hbox{or} \quad \Omega\cap O^i = O^i_+ \,.
\end{equation*}

{ We also allow the fixed boundary to intersect the exterior boundary of $D$, 
i.e.~$\Gamma_0 \cap \partial D \ne \emptyset$. Then, some sets $O^i$ are not  strictly contained in $D$ 
($O^i \not\subseteq D$), so that,  of the two equalities above, only one holds, and it is the same for every $\Omega \in \mathcal{O}$.}
%However, this causes no issues for the subsequent analysis. 
\end{remark}

%\subsection{Dirichlet boundary conditions}
\subsection{Dirichlet-type free boundary}

Let us start by imposing the condition on functions $h_i$. 
\begin{itemize}
\item[(H1)] For any $i\in\{1,2,\dots, m\}$ there exists $H_i\in \HH^1_0(O)$ such that $h_i=H_i|_{\Gamma_0}$,
where $O=\cup_{j=1}^N O^j$, and $O^j$ are given by Lemma \ref{lem:O}.
\end{itemize} 

We obtain the following existence result.

\begin{theorem}\label{tm:dir}
%\mbox{}\\
  Under assumptions {\rm (A1)}--{\rm (A5)} and {\rm (H1)}, the optimal design problem \eqref{RelaxedProblem} admits a solution in the case $\Gamma_2=\emptyset$.
\end{theorem}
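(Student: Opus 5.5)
We apply the direct method of the calculus of variations. Let $(\Omega_n,\theta_n,\mA_n)\in\mathcal A$ be a minimizing sequence for $\JJ$, and let $\vu_n$ be the corresponding states.

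\textbf{Step 1: compactness of the domains.} By the classical compactness result of Chenais for domains with the uniform $\eps$-cone property (see \cite[Theorem 2.4.10]{HP18}), a subsequence of $\Omega_n$ converges to some open $\Omega\subseteq D$ satisfying the $\eps$-cone property. The convergence holds in the Hausdorff sense for the complements $\overline D\setminus\Omega_n$, and also for the closures. Moreover, the characteristic functions $\chi_{\Omega_n}$ converge to $\chi_\Omega$ in $\Lj D$, so $|\Omega|=V$. Connectedness of $\Omega$ follows from Hausdorff convergence and the uniform Lipschitz character. The condition $\Gamma_0\subset\partial\Omega$ is handled through Lemma~\ref{lem:O}: inside each $O^i$, $\Omega_n\cap O^i$ is either $O^i_-$ or $O^i_+$. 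Passing to a further subsequence, the choice is eventually constant for each $i$, so the limit has the same structure. Since boundaries converge in the Hausdorff sense, $\partial\Omega\cap O^i=\Gamma_0^i$.

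\textbf{Step 2: extension and H-convergence.} We extend $\theta_n$ by $0$ and $\mA_n$ by $\beta\mI$ outside $\Omega_n$. Then $\theta_n\to\theta$ weak-$*$ in $\Lb D$ along a subsequence. By compactness of H-convergence on $D$, together with the G-closure characterization (that is, closedness of $\{(\theta,\mA):\mA\in\Kth\}$ under weak-$*$ times H-convergence, \cite{MT85,All02}), we get $\mA_n\xrightarrow{H}\mA$ with $\mA\in\Kth$ a.e. On the other hand, $\theta_n\chi_{\Omega_n}$ converges to $\theta$, which vanishes outside $\Omega$ thanks to the $\Lj{}$ convergence of $\chi_{\Omega_n}$. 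Hence $\int_\Omega\theta=q_\alpha$, and $(\Omega,\theta|_\Omega,\mA|_\Omega)\in\mathcal A$.

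\textbf{Step 3: convergence of states.} Because $\Gamma_2=\emptyset$, we write $u_{i,n}=H_i+w_{i,n}$ with $w_{i,n}\in\Hjn{\Omega_n}$, extended by zero to $\Hjn D$. Here $H_i\in\HH^1_0(O)$ is extended by zero; it has zero trace on $\partial\Omega_n\setminus\Gamma_0$ by Lemma~\ref{lem:O}iii), since that part of the boundary lies outside $O$. Note that $H_i$ restricted to the side not in $\Omega_n$ is irrelevant. Uniform ellipticity then gives a bound in $\Hjn D$, so $w_{i,n}\rightharpoonup w_i$ weakly in $\Hjn D$. We must show two things: that $w_i\in\Hjn\Omega$, and that $u_i=H_i+w_i$ solves the limit problem with $\mA$. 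The first follows because Hausdorff convergence of the complements, under the uniform cone condition, implies Mosco convergence of $\Hjn{\Omega_n}$ to $\Hjn\Omega$ (Šverák/Chenais; \cite[Ch.~3]{BB05}, \cite{HP18}). Mosco convergence also provides, for each test function $\varphi\in\Cbc\Omega$, that $\varphi\in\Cbc{\Omega_n}$ for large $n$, by Hausdorff convergence. We then pass to the limit in $\int\mA_n\nabla u_{i,n}\cdot\nabla\varphi=\int f_i\varphi$ using the div-curl/H-convergence argument localized: $\mA_n\nabla u_{i,n}$ is bounded in $\Ld{}$ and its divergence is fixed on compact subsets of $\Omega$, so on compacts the flux converges weakly to $\mA\nabla u_i$. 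This identification step, combining H-convergence on varying domains with Mosco convergence, is the main obstacle. We handle it by working on compact subsets $K\ssubset\Omega$, which are contained in $\Omega_n$ for large $n$, so that the standard H-convergence compactness applies.

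\textbf{Step 4: lower semicontinuity.} By Rellich and Sobolev embeddings, $\vu_n\to\vu$ strongly in $\pL r D$ for every $r<q^*$. Assumption (A5) then gives strong convergence of $g_\gamma(\cdot,\vu_n)$ in $\Lj D$. This uses the choice of $s_2$: by Hölder, $\psi_\gamma|\vu_n|^q$ converges. Combined with $\theta_n\chi_{\Omega_n}\rightharpoonup\theta\chi_\Omega$ weak-$*$ and $(1-\theta_n)\chi_{\Omega_n}\rightharpoonup(1-\theta)\chi_\Omega$, we obtain $\JJ(\Omega_n,\theta_n,\mA_n)\to\JJ(\Omega,\theta,\mA)$. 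The functional is therefore continuous along the sequence, and the limit is a minimizer.
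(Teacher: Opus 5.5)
Your proposal is correct and follows essentially the same route as the paper. Both arguments use Chenais compactness for the domains, extend $\theta_n$ by $0$ and $\mA_n$ by $\beta\mI$, apply H-compactness with stability of the G-closure, lift by $H_i$ and take zero extensions bounded in $\mathrm{H}^1_0(D)$, identify the limit equation on sets compactly contained in $\Omega$ via local H-convergence of the fluxes, and obtain continuity of $\JJ$ from Rellich and (A5). The deviations are minor: the paper gets $\widetilde\vv|_\Omega\in\mathrm{H}^1_0(\Omega)$ directly from $\widetilde\vv=0$ a.e.\ on $D\setminus\Omega$ together with the Lipschitz regularity of $\Omega$, which is the fact underlying the Mosco-convergence result you cite; and $\Gamma_0\subseteq\partial\Omega$ already follows from Hausdorff convergence of $\partial\Omega_n$, so your side-selection argument via Lemma~\ref{lem:O} is unnecessary.
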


\begin{proof}
%prebačeno u Step I
%Let us assume that $g=0$. We shall return to the case of general non-homogeneous boundary % conditions at the end of the proof. 

\medskip

\noindent\textbf{Step I.}
Let $(\Omega_n,\theta_n,\mA_n)$ be a minimizing sequence
associated to \eqref{RelaxedProblem}. Our aim is to find a convergent subsequence (in the right sense) and prove that the limit solves the problem. 

We begin by analyzing the sequence $(\Omega_n)$. By \cite[Theorem 2.4.10]{HP18},
see also \cite{Chenais}, there exists a subsequence of $(\Omega_n)$ (for simplicity of notation, we do not relabel it),
together with an open $\Omega\subseteq D$ satisfying the $\varepsilon$-cone property such that 
$\Omega_n$ converges to $\Omega$ in the sense of Hausdorff for open sets, in the sense of 
characteristic functions and in the sense of compacts (for precise definitions, see \cite[Section 2.2]{HP18}). Moreover, both $\overline{\Omega}_n$
and $\partial\Omega_n$ converge in the Hausdorff sense for compact sets to 
$\overline{\Omega}$ and $\partial\Omega$, respectively (cf.~\cite{Holz}). Our goal is to show that the limit set $\Omega$ belongs to $\lO$.

Since all $\Omega_n$ are connected, the same holds for the sequence of closures $\overline{\Omega}_n$. Hence, it follows by 
\cite[Proposition 2.2.19]{HP18} that the limit set $\overline{\Omega}$ 
is also connected. {Consequently, since $\Omega$ is a Lipschitz domain,
it is connected as well.}
The convergence in the sense of characteristic functions implies that the volume is preserved, i.e. 
\[
|\Omega|= \|\chi_\Omega\|_{\Lj D} = \lim_n \|\chi_{\Omega_n}\|_{\Lj D}
    = \lim_n|\Omega_n| = V \,.
\]
Thus, it is left to check that $\Gamma_0\subseteq \partial\Omega$. 
This follows immediately from $\overline{\Gamma_0}\subseteq\partial\Omega_n$ and 
$\partial\Omega_n\to\partial\Omega$ in the sense of Hausdorff (see \cite[Subsection 2.2.3.2]{HP18}).

Let us now focus on $(\theta_n)$. We first extend each $\theta_n$ by zero to the whole $D$ and we denote this extension by $\tilde\theta_n$. Then we obviously have
$\tilde\theta_n\in \Lb{D;[0,1]}$. Hence, we can pass to a subsequence (again not relabeled) such that $\tilde\theta_n$ converges weakly-$*$ to 
$\tilde\theta\in \Lb{D;[0,1]}$. Let us denote $\theta:=\tilde\theta|_{\Omega}$.
Then $\theta\in \Lb{\Omega;[0,1]}$ and 
\[
\int_\Omega \theta(\mx) \,d\mx = \int_D \chi_{\Omega}(\mx) \tilde\theta(\mx)\,d\mx
    = \lim_n \int_D \chi_{\Omega_n}(\mx) \tilde\theta_n(\mx) \,d\mx
    = \lim_n \int_{\Omega_n} \theta_n(\mx) \,d\mx = q_\alpha \,.
\]

It remains to analyze the sequence $(\mA_n)$. For every $n\in\N$, 
we have $\mA_n\in \Kthn$ almost everywhere on  $\Omega_n$. Our aim is to extend $\mA_n$ to the whole domain  $D$, 
denoting the extension by $\widetilde\mA_n$,
in such a way  that the preceding property remains valid when $\mA_n$, $\theta_n$ and $\Omega_n$ 
are replaced by $\widetilde\mA_n$, $\tilde\theta_n$ and $D$, respectively:
%It is easy to see that this holds for
\[
\widetilde\mA_n(\mx) := \left\{\begin{array}{cc} 
    \mA_n(\mx), & \mx\in\Omega_n \\
    \beta\mI, & \mx\in D\setminus\Omega_n.
\end{array}\right. \,
\]

Now we can apply compactness of the H-convergence 
(see \cite[subsections 1.2.4 and 2.1.2]{All02} and \cite[Chapter 6]{Tar09}), 
which ensures existence 
of $\widetilde\mA\in\Kthtil$ and a subsequence (not relabeled) such that $(\widetilde\mA_n)$ H-converges to $\widetilde\mA$. 
Hence, for $\mA:=\widetilde\mA |_\Omega$, by the previous analysis, we have 
$(\Omega,\theta,\mA)\in{\mathcal{A}}$, i.e.~on 
the limit we got an admissible design.
%It is left to see that ${\JJ}$ attains its minimum in this admissible design, for which 
%we need to take into account \eqref{state1} as well. 

\medskip

\noindent\textbf{Step II.}
To write down weak formulation of the boundary value problem 
\begin{equation}\label{staten}
 \left\{ \begin{array}{cl} -\dv(\mA_n \nabla u^n_i)=f_i& \text{ in }\Omega_n\\ 
u^n_i=h_i &\text{ on }\Gamma_0\\
u^n_i=0 &\text{ on }\Gamma^n_1 \,, \end{array} \right.
\end{equation}
we begin by homogenizing the boundary conditions on $\Gamma_0$.

We use the same notation for the extension by zero to the whole $\R^d$ 
of $H_i$ given in (H1). 
It is well-known that $H_i\in\HH^1(\R^d)$ (see \cite[Lemma 3.27]{AF03}).
Defining $v^n_i=u^n_i-H_i|_{\Omega_n}$, for $i=1,2,\ldots, m$,
we see that $v^n_i$ is the unique solution of
\begin{equation}\label{statenh}
 \left\{ \begin{array}{cl} -\dv(\mA_n \nabla v^n_i)=f_i+g^n_i& \text{ in }\Omega_n\\ 
v^n_i=0 &\text{ on }\partial\Omega_n=\Gamma_0\cup\Gamma^n_1, \end{array} \right.
\end{equation}
where $g^n_i:=\dv{\mA}_n\nabla (H_i|_{\Omega_n})\in\Hmj{\Omega_n}$. 
This step relies on the conditions $H_i|_{\Gamma_0}=h_i$ and 
$H_i|_{\Gamma_1^n}=0$ for all $n \in \mathbb{N}$
(cf.~(H1) and Lemma \ref{lem:O}(iii)).
Applying $\mA_n\in\Kthn$, we have
\begin{equation*}%\label{eq:bound_gi}
\|g^n_i\|_{\Hmj{\Omega_n}}
	\leq\beta\|H_i\|_{\Hj{\Omega_n}}
	\leq\beta\|H_i\|_{\Hj{\R^d}}
	=\beta\|H_i\|_{\Hj{O}} \,,
\end{equation*}
where we have used that $\supp H_i\subseteq O$.

 Let us denote by 
$\widetilde\vv^n$ the extension of $\vv^n=(v_1^n,\dots,v_m^n)$ by zero on 
$D\setminus\Omega_n$. Then $\widetilde\vv^n\in \Hjn{D;\R^m}$.
Moreover, using the a priori estimate, we have that
$(\widetilde\vv^n)$ is bounded in $\Hjn{D;\R^m}$
(cf.~\cite[Proposition 3.2.1]{HP18}). Indeed, for any 
$i\in \{1,2,\dots,m\}$ and any $n\in\N$ we have
\begin{equation}\label{eq:dir_stepII_est}
\begin{aligned}
\alpha c_1 \|\widetilde v_i^n\|_{\Hjn{D}}^2 &\leq \alpha  \int_D |\nabla\widetilde v_i^n|^2 
    = \alpha  \int_{\Omega_n} |\nabla v_i^n|^2 \\
&\leq \int_{\Omega_n} \mA_n\nabla v_i^n \cdot \nabla v_i^n 
= \langle f^n_i, v_i^n\rangle_{\Hmj{\Omega_n},\Hjn{\Omega_n}}\\
%    = \langle f^n_i, \widetilde v_i^n\rangle_{\Hmj{D},\Hjn{D}} \\
&\leq c_2\|v_i^n\|_{\Hjn{\Omega_n}}=c_2\|\widetilde v_i^n\|_{\Hjn{D}} \,,
\end{aligned}
\end{equation}
where the  Poincar\'e inequality (the constant $c_1$ depends only on 
the set $D$; see \cite[Corollary 6.31]{AF03}) is applied first. In the second step we have used that $\nabla \widetilde v_i^n$ agrees 
with the zero extension of $\nabla v_i^n$ (cf.~\cite[Lemma 3.27]{AF03}). 
Then the coercivity of 
$\mA_n$ is used, the weak formulation of \eqref{statenh},
and the boundedness of $f^n_i:=f_i|_{\Omega_n}+g^n_i$ in $\Hmj{\Omega_n}$ 
($c_2$ being $\max_i (\|f_i\|_{\Ld D} + \beta\|H_i\|_{\Hj O})$).
%the fact that the actions of $f_i$ and its restriction $f_i|_{\Omega_n}$ coincide on functions in $\Hjn{\Omega_n}$ (see \cite[(3.26)]{HP18}).
%Applying Young's inequality to the right-hand side, we obtain the desired estimate.

Therefore, 
we can pass to another subsequence such that 
$(\widetilde\vv^n)$ is weakly convergent to $\widetilde\vv$ in $\Hjn{D;\R^m}$
and converges almost everywhere on $D$ (for the latter we use that by the Rellich-Kondrašov compactness theorem $(\widetilde\vv^n)$ converges strongly in $\Ld{D;\R^m}$; see \cite[Corollary 2.17 and Theorem 6.3]{AF03}). 

Finally, $\widetilde \vu^n:=\widetilde\vv_n+\vH$ converges to $\widetilde\vu:=\widetilde\vv+\vH$ weakly in $\Hj{D;\R^m}$ and almost everywhere, as well (here $\vH=(H_1,\dots,H_m)$). 
Let us introduce $\vv:=\widetilde\vv|_\Omega$ and $\vu:=\widetilde\vu|_\Omega$.
We want to show that $\vu$ is a unique solution 
of \eqref{state1}, where $\mA$ and $\Omega$ are previously defined.
\medskip

\noindent\textbf{Step III.}
For any $n\in\N$ we have $\vnul=(\chi_D-\chi_{\Omega_n})\widetilde\vv^n$ a.e.~in $D$. 
Passing to the limit (possibly on a subsequence to ensure that $(\chi_{\Omega_n})$ converges a.e.~to $\chi_\Omega$) we obtain that $\widetilde\vv=\vnul$ a.e.~on $D\setminus\Omega$. Hence, $\vv\in \Hjn{\Omega;\R^m}$ (see e.g.~\cite[Proposition 3.2.16]{HP18}).

Let us take an arbitrary $i\in\{1,2,\dots,m\}$.
It remains to show that $u_i=v_i+H_i|_\Omega$, 
satisfies the weak formulation associated with \eqref{state1}. 
By reversing the above procedure of homogenizing the boundary conditions on $\Gamma_0$, 
it is evident that it is sufficient to obtain that $v_i$ is a weak solution to 
\eqref{statenh} with $\Omega_n$, $\mA_n$ and $g_i^n$
replaced by $\Omega$, $\mA$ and $g_i:=\dv \mA\nabla (H_i|_\Omega)$, respectively. 
Note that $g_i$ satisfies the same bound as $g_i^n$.

Fix an arbitrary test function 
$\varphi \in \Cbc\Omega$. Given this test function $\varphi$, 
select an open set $U$ such that $\supp\varphi \subseteq U \ssubset \Omega$.
Since $(\Omega_n)$ converges to $\Omega$ in the sense of compacts
(cf.~\cite[Proposition 2.2.17]{HP18}), there exists $n_1=n_1(U)\in\N$
such that for any $n\geq n_1$ we have $U\ssubset\Omega_n$.
In particular, $\varphi\in\Cbc{\Omega_n}$.
Thus, $\varphi$ is an admissible test function for the weak formulation of the 
problem for $v^n_i$, i.e.~for any $n\geq n_1$ we have
\begin{equation*}%\label{eq:weak_form_U}
\int_{\Omega_n} \mA_n\nabla v_i^n \cdot\nabla \varphi \,d\mx 
    =  \langle f_i|_{\Omega_n}+ g_i^n ,\varphi\rangle_{\Hmj{\Omega_n},\Hjn{\Omega_n}} \,.
\end{equation*}
By moving $\langle  g_i^n ,\varphi\rangle_{\Hmj{\Omega_n},\Hjn{\Omega_n}} = -\int_{\Omega_n} \mA_n \nabla (H_i|_{\Omega_n})\cdot\nabla\varphi  \,d\mx$ to the left-hand side we get $u_i^n$ in place 
of $v_i^n$. Moreover, using that $f_i\in\Ld D$ and
$\supp \varphi\subseteq U\ssubset \Omega\cap\Omega_n$, we have
\begin{equation*}%\label{eq:weak_form_rhs}
\begin{aligned}
\langle f_i|_{\Omega_n},\varphi\rangle_{\Hmj{\Omega_n},\Hjn{\Omega_n}}
	= \int_U f_i(\mx)\varphi(\mx) \,d\mx   \,. 
\end{aligned}
\end{equation*}
Thus, the following holds:
\begin{equation}\label{eq:weak_form_uin}
\int_{U} \mA_n\nabla u_i^n \cdot\nabla \varphi \,d\mx 
	=  \int_U f_i(\mx)\varphi(\mx) \,d\mx \,,
\end{equation}
where we accounted for the fact that $\varphi \in\Cbc{U}$.

It remains to analyse the limit of the left-hand side in \eqref{eq:weak_form_uin}. 
More precisely, we want to show that it converges, 
as $n$ tends to infinity, to $\int_U \mA\nabla u_i\cdot\nabla\varphi\,d\mx$.
Here, we utilize the concept of H-convergence to study the behavior of the term 
$\mA_n\nabla u_i^n$, with the aim of applying \cite[Proposition 1.2.19]{All02}
(see also \cite[Lemma 13.2]{Tar09}).

Let us start by noting that, as shown in \cite[Lemma 10.5]{Tar09},
$\mA_n|_U=\widetilde\mA_n|_U$ H-converges to $\mA|_U=\widetilde\mA|_U$
(recall that $n\geq n_1$).
Furthermore, since \eqref{eq:weak_form_uin} holds 
for any $\varphi\in\Cbc U$, we have that 
\begin{equation*}
-\dv\bigl(\mA_n|_U \nabla (u_i^n)|_U\bigr)  = f_i|_U\in \Ld{U} \,.
\end{equation*}
%Finally, from the weak convergence $\widetilde u_i^n\rightharpoonup \widetilde u_i$ in
%$\Hjn{D}$, one easily gets that $v_i^n|_U\rightharpoonup v_i|_U$ weakly in
%$\Hj{U}$. 
%(it is sufficient to consider an arbitrary $\psi\in\Cbc U$ and  observe that still $\psi \widetilde u_i^n\rightharpoonup\psi\widetilde u_i$ weakly in $\Hjn{D}$).
Therefore, since $u_i^n|_U\rightharpoonup u_i|_U$ weakly in
$\Hj{U}$, we may apply \cite[Proposition 1.2.19]{All02}, which yields 
$\mA_n|_U\nabla (u_i^n)|_U\rightharpoonup \mA|_U\nabla (u_i)|_U$ weakly
in $\Ldl{U;\R^d}$. Returning to the left-hand side of \eqref{eq:weak_form_uin} we get
\begin{equation*}
\begin{aligned}
\lim_{n\to\infty}\int_{U} \mA_n\nabla u_i^n \cdot\nabla \varphi \,d\mx
	&= \int_{U} \mA\nabla u_i \cdot\nabla \varphi \,d\mx 
	= \int_{\Omega} \mA\nabla v_i \cdot\nabla \varphi \,d\mx 
		- \langle g_i,\varphi\rangle_{\Hmj{\Omega},\Hjn{\Omega}} \,.
\end{aligned}
\end{equation*}

Finally, by the arbitrariness of $\varphi$ and  the density of $\Cbc\Omega$ in $\Hjn\Omega$,
the function $v_i$ is the weak solution of \eqref{statenh}, with $\Omega_n$, $\mA_n$ and $g_i^n$
replaced by $\Omega$, $\mA$ and $g_i$, respectively.
In light of the preceding observations, this immediately leads  the 
desired conclusion  that $u_i$ is the weak solution to \eqref{state1}.

%This demonstrates an important property: the sequence of zero extensions
%of the unique solutions $(\widetilde\vu_n)$ associated to 
%$(\Omega_n,\theta_n,\mA_n)$ and $\vf$
%converges weakly to the zero extension $\widetilde\vu$
%of the unique solution corresponding to
%$(\Omega,\theta,\mA)$ and $\vf$.
%The weak convergence $\widetilde\vu^n\rightharpoonup \widetilde\vu$ in $\Hjn{D}$ is a crucial step in establishing the existence of minimizers for $\overline{\JJ}$, as will be shown in the next step.

\medskip

\noindent\textbf{Step IV.} 
The final task is to prove that ${\JJ}(\Omega_n,\theta_n,\mA_n)$
converges to ${\JJ}(\Omega,\theta,\mA)$, i.e.~that the relaxed admissible
design $(\Omega,\theta,\mA)$ is the minimizer of ${\JJ}$ on 
${\mathcal{A}}$.

The value of the functional ${\JJ}$ at the design configuration
$(\Omega_n,\theta_n,\mA_n)$ can be expressed as (note that here we work
with the final subsequence obtained in the previous step):
\begin{equation*}
{\JJ}(\Omega_n,\theta_n,\mA_n)
    =\int_D \chi_{\Omega_n}(\mx) \Bigl( \tilde\theta_n(\mx) g_{\alpha}(\mx ,\widetilde\vu_n(\mx)) + (1-\tilde\theta_n(\mx)) g_{\beta}(\mx ,\widetilde\vu_n(\mx )) \Bigr) \,\mathrm{d}\mx \,.
\end{equation*}
This representation is useful for clearly separating the ingredients that are already 
under control from those that still require justification in order to pass to the 
limit as $n$ tends to infinity. 
Indeed, $\chi_{\Omega_n}$ strongly converges to $\chi_\Omega$
in any $\pL sD$, $s\in [1,\infty\rangle$ (since $(\Omega_n)$
converges to $\Omega$ in the sense of characteristic functions;
cf.~\cite[Definition 2.2.3]{HP18}), while $(\tilde\theta_n)$
converges weakly-$*$ to $\tilde\theta$ in $\pL \infty D$.
Consequently, it suffices to prove that there exists $p>1$
such that
$g_\gamma(\cdot,\widetilde\vu_n)$ converges strongly to 
$g_\gamma(\cdot,\widetilde\vu)$ in $\pL pD$, $\gamma\in\{\alpha,\beta\}$.

Since $\widetilde\vu_n(\mx)$ converges to $\widetilde\vu(\mx)$
for a.e.~$\mx\in D$ and $g_\gamma$, $\gamma\in\{\alpha,\beta\}$, are 
Carath\' eodory functions on $D\times\R^m$, we have that
\begin{equation*}
g_\gamma(\mx,\widetilde\vu_n(\mx)) \overset{n\to\infty}{\longrightarrow} g_\gamma(\mx,\widetilde\vu(\mx)) \quad \hbox{a.e.~$\mx\in D$},
    \gamma\in\{\alpha,\beta\}\,.
\end{equation*}
In order to conclude the argument via the Lebesgue dominated convergence theorem, it remains to find, for some $p>1$, 
an $\mathrm{L}^p$-integrable function that serves as a dominating
function for $|g_\gamma(\cdot,\widetilde\vu_n)|$.

A more precise application of the Rellich-Kondrašov theorem 
(see \cite[Theorem 6.3]{AF03}) implies that, for any
$r\in [1, q^\ast\rangle$, the sequence
$(\widetilde\vu_n)$ converges strongly to $\widetilde\vu$ in $\pL rD$, 
where $q^\ast$ is defined in (A5).
Hence, for any $r\in [1, q^\ast\rangle$ there exists 
$w\in \pL rD$ such that %$|\widetilde\vu_n(\mx)|\geq w(\mx)$ a.e.~$\mx\in D$. 
$|\widetilde\vu_n(\mx)|\leq w(\mx)$ a.e.~$\mx\in D$.
Then from the growth condition on $g_\gamma$ we get
\[
|g_\gamma(\mx,\widetilde\vu_n(\mx))| \leq |\varphi_\gamma(\mx)|
    + |\psi_\gamma(\mx)| w(\mx)^q =: G(\mx) \,, \quad \hbox{a.e.~$\mx\in D$} \,.
\]
Now $w^q\in\pL {\frac rq}D$ and by the H\"older inequality $\psi_\gamma w^q$ belongs to $\pL sD$ with $s>1$ if 
\[
\frac1s:=\frac 1{s_2}+ \frac qr<1, 
%\qquad \Longleftrightarrow \qquad \frac 1{s_2} =\frac1s -  \frac qr< 1 -  \frac q{q^\ast}=\frac{q^\ast-q}{q^\ast},
\]
where we have the freedom to choose the value $r\in[1,q^\ast\rangle$. For $d=2$ by (A5) we have
$\frac1{s_2}<1$, and since $q^\ast=\infty$, the parameter $r$ can be chosen so that $\frac qr$ is arbitrarily small. On the other hand, for $d>2$ it suffices that $\frac 1{s_2}+ \frac q{q^\ast}<1$ because $r$ can be taken arbitrarily close to $q^\ast$. Hence the assumption (A5) ensures that these conditions are satisfied.
Therefore, $G\in\pL pD$, with $p=\min\{s_1,s\}>1$. 
This completes the proof. 
%, where $s$ satisfies the equality above. 

\end{proof}

\begin{remark}
The result of Theorem \ref{tm:dir} still holds if we remove 
assumption {\rm (A3)} and set $\Gamma_0=\emptyset$. 
However, from the perspective of concrete applications, 
this scenario is not of particular importance, as the position 
of the admissible sets is typically \emph{fixed} using the 
fixed portion of the boundary $\Gamma_0$.
\end{remark}

\subsection{Neumann-type free boundary}

To begin with, we consider the case in which the entire free part of the boundary is 
subject to a homogeneous Neumann boundary condition, that is $\Gamma_1=\emptyset$. 
We impose the following assumption on the functions $h_i$:
\begin{itemize}
	\item[(H2)] For each $i\in\{1,2,\dots, m\}$ there exists $H_i\in \HH^1(D)$ such 
    that $h_i=H_i|_{\Gamma_0}$.
\end{itemize}
%For a detailed discussion on the assumptions imposed on $h_i$, we refer to  Remark \ref{rem:assum_hi}. 

By (A3) we know that the manifold boundary of $M$, $\partial M$, 
is a compact $(d-2)$-dimensional Lipschitz manifold 
(without boundary). Hence, $\partial M$ is locally the graph of a 
Lipschitz-continuous function and has 
a finite number of connected components.
This is important since for any admissible domain $\Omega\in\mathcal{O}$
we have $\overline{\Gamma_0}\cap \Gamma_2 = \partial M$, 
where $\Gamma_2=\partial\Omega\setminus \Gamma_0$. Hence, 
the set of all smooth functions $\varphi\in{\rm C}^\infty(\overline{D})$ 
that vanish on a neighborhood of $\Gamma_0$, 
when restricted to $\Omega$, is dense in 
\begin{equation}\label{eq:neum_V}
V:=\{v\in\Hj{\Omega}:{v}|_{\Gamma_0}=0\}
\end{equation}
(see \cite{Bern,BLD}).

We are now prepared to state and prove the following theorem.

%For given $\Omega$ and conductivity $\mA$ the problem \eqref{state1}, provided that each $f_i$ belongs to $V'$, admits a unique solution $u_i\in w_i+V$, where $w_i\in\Hj{\Omega}$ is a function whose trace on $\Gamma_0$ equals $h_i$, for each $i=1,\ldots, m$. 
%For the purposes of the present analysis, we shall assume that $\vf\in\pL2{D;\R^m}$, with the understanding that this assumption will be further restricted in the subsequent numerical analysis. 

\begin{theorem}\label{tm:neu}
%\mbox{}\\
  Under assumptions {\rm (A1)}--{\rm (A5)} and {\rm (H2)}, there exists a solution to
  the optimal design 
  problem \eqref{RelaxedProblem} when $\Gamma_1=\emptyset$.
\end{theorem}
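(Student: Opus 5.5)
We follow the structure of the proof of Theorem \ref{tm:dir}. Only the treatment of the state equation changes, since zero extension no longer preserves $\HH^1$ regularity.

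\textbf{Step I} carries over verbatim. From a minimizing sequence $(\Omega_n,\theta_n,\mA_n)$ we extract, via \cite[Theorem 2.4.10]{HP18}, a limit $\Omega$ with the $\eps$-cone property. The convergence holds in the Hausdorff sense, in the sense of characteristic functions, and in the sense of compacts. As before we obtain $\Omega\in\mathcal O$, together with $\tilde\theta_n\rightharpoonup\tilde\theta$ weakly-$*$ and $\widetilde\mA_n\to\widetilde\mA\in\lK(\tilde\theta)$ in the H-sense on $D$.

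\textbf{Step II.} We homogenize the boundary data by writing $u_i^n=v_i^n+H_i|_{\Omega_n}$ with $H_i$ from (H2). Then $v_i^n\in V_n:=\{v\in\Hj{\Omega_n}: v|_{\Gamma_0}=0\}$ solves
\[
\int_{\Omega_n}\mA_n\nabla v_i^n\cdot\nabla\varphi=\int_{\Omega_n}f_i\varphi-\int_{\Omega_n}\mA_n\nabla H_i\cdot\nabla\varphi,\qquad \varphi\in V_n .
\]
To get uniform bounds we need a Poincaré inequality on $V_n$ whose constant is independent of $n$. We plan to obtain it as follows. The $\eps$-cone property gives uniform Lipschitz character, so by Chenais \cite{Chenais} there exist extension operators $E_n:\Hj{\Omega_n}\to\Hj{D'}$, with $D'\supset\overline D$ a fixed ball, whose norms are bounded uniformly in $n$. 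A contradiction/compactness argument then yields the uniform Poincaré inequality. Suppose $\|w_n\|_{\Ld{\Omega_n}}=1$ and $\|\nabla w_n\|\to0$. Rellich applied to $E_nw_n$ gives a subsequence converging strongly in $\Ld{D'}$. The limit is locally constant on $\Omega$, hence constant because $\Omega$ is connected. Its trace on $\Gamma_0$ vanishes: this follows from Lemma \ref{lem:O}, since on each fixed set $O^i_\pm$ adjacent to $\Gamma_0^i$ the trace is continuous. Hence the limit is zero, which contradicts the normalization $\|w_n\|_{\Ld{\Omega_n}}=1$ in the limit, because $\chi_{\Omega_n}\to\chi_\Omega$. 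With this inequality, the estimate \eqref{eq:dir_stepII_est} (with $f_i\in\Ld D$ and $\|\nabla H_i\|$ as data) gives $\sup_n\|u_i^n\|_{\Hj{\Omega_n}}<\infty$. Consequently $E_nu_i^n$ is bounded in $\Hj{D'}$. We then pass to a subsequence converging weakly in $\Hj{D'}$, strongly in $\pL r{D'}$ for $r<q^\ast$, and almost everywhere, with limit $\widetilde u_i$, and set $u_i:=\widetilde u_i|_\Omega$.

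\textbf{Step III} is the main obstacle: identifying the limit equation, including its boundary behaviour. First, $u_i-H_i\in V$. This holds because $E_nu^n_i-H_i$ has zero trace on $\Gamma_0$, and on the fixed neighbourhoods $O^i_\pm$ the trace operator is weakly continuous. Second, the weak form must hold for all $\varphi\in V$. By density (stated before the theorem) it suffices to take $\varphi\in{\rm C}^\infty(\overline D)$ vanishing near $\Gamma_0$. The difficulty is that such $\varphi$ are \emph{not} compactly supported in $\Omega$, so the Dirichlet-case argument via interior compact sets no longer covers the flux through the moving boundary $\Gamma_2^n$. We intend to handle this by passing to the limit globally rather than locally. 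Set $\sigma_n:=\chi_{\Omega_n}\mA_n\nabla u_i^n$; it is bounded in $\Ld{D;\R^d}$, so along a subsequence $\sigma_n\rightharpoonup\sigma$. Testing with $\varphi$ on $\Omega_n$ gives
\[
\int_D\sigma_n\cdot\nabla\varphi=\int_D\chi_{\Omega_n}f_i\varphi ,
\]
and this identity passes to the limit. On each $U\ssubset\Omega$ the H-convergence argument of Step III of the previous proof (\cite[Proposition 1.2.19]{All02}) shows $\sigma=\mA\nabla u_i$. Moreover $\sigma=0$ a.e. on $D\setminus\overline\Omega$ by convergence in the sense of compacts of the complements. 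Finally $|\partial\Omega|=0$, since $\Omega$ is Lipschitz. Hence $\int_\Omega\mA\nabla u_i\cdot\nabla\varphi=\int_\Omega f_i\varphi$, which is exactly the weak form of \eqref{state1} with $\Gamma_1=\emptyset$.

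\textbf{Step IV.} The convergence of $\JJ$ then follows exactly as before. We use $\chi_{\Omega_n}\to\chi_\Omega$ strongly, $\tilde\theta_n\rightharpoonup\tilde\theta$ weakly-$*$, and the a.e. convergence and $\pL r{}$ domination of $E_n\vu^n$. Together with the growth condition (A5), these give strong $\pL p{}$ convergence of $g_\gamma(\cdot,E_n\vu^n)$ for some $p>1$, which concludes the proof.
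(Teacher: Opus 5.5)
Your proposal is correct and follows the paper's proof essentially step for step: Step I is unchanged, the Dirichlet data are homogenized with $H_i$, the Chenais extension is combined with a uniform Poincaré inequality proved by the same compactness/contradiction argument as Lemma \ref{lem:poincare}, interior H-convergence identifies the limit flux, and Step IV is unchanged. The only difference is in passing to the limit near the moving boundary, which is a repackaging of the same ingredients: the paper splits $D$ into $\omega_\eta$ and the layer $\Gamma_\eta$ and bounds the layer's contribution by $c\sqrt{|\Gamma_\eta|}$, whereas you take a weak limit of the zero-extended fluxes $\chi_{\Omega_n}\mA_n\nabla u_i^n$ and identify it on $\Omega$, on $D\setminus\overline\Omega$, and on the null set $\partial\Omega$.
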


\begin{proof}
Following the strategy used in the proof of Theorem \ref{tm:dir}, 
we shall highlight only the steps where noteworthy differences 
are present.

As in the proof of Theorem \ref{tm:dir}, we begin with a minimizing sequence 
$(\Omega_n,\theta_n, \mA_n)$, and Step I from the previous proof applies unchanged.

The following two steps requires greater care. 
Let us start with the analysis of the boundary value problem (Step II in the proof of Theorem \ref{tm:dir}).
For any $i\in\{1,2,\dots,m\}$ and any $n\in\N$, 
there is a unique solution $v_i^n$ to the following variational problem:
\begin{equation}\label{eq:weakneun}
\int_{\Omega_n} \mA_n\nabla (v^n_i+H_i)\cdot\nabla\varphi\,d\mx=\int_{\Omega_n} f_i\varphi\,d\mx\,,\;\varphi\in V_n\,,
\end{equation}
where $H_i$ is given in (H2) and
\[
V_n:=\{v\in\Hj{\Omega_n}:v|_{\Gamma_0}=0\} \,.
\]
Then it is easy to check that $u_i^n = H_i|_{\Omega_n} + v_i^n \in H_i|_{\Omega_n} + V_n$
is the unique weak solution to  
\begin{equation}\label{eq:neumann_u_n}
\left\{ \begin{array}{cc} -\dv(\mA_n \nabla u_i^n)=f_i& \text{ in }\Omega_n\\ 
u_i^n=h_i &\text{ on }\Gamma_0\\
\mA\nabla u_i^n\cdot \mn=0 &\text{ on }\Gamma_2^n = \partial\Omega_n\setminus\Gamma_0. \end{array} \right.
\end{equation}

In contrast to the previous proof, we make use of the extension $\widetilde\vv^n$ to the set $D$ constructed according to the method presented by Chenais \cite{Chenais}. Due to the uniform cone condition, there exists a constant $c_0>0$, not depending on $n$ such that
\begin{equation}\label{eq:chenais}
\|\widetilde v^n_i\|^2_{\Hj{D}}\leq c_0\|v^n_i\|^2_{\Hj{\Omega_n}} \,.
\end{equation}
Another crucial step is the usage of the Poincar\'e inequality. 
While in the previous theorem the situation was trivial (it was applied on 
the space $\Hjn D$), here the existence of the uniform estimate for the 
extensions ensures that the constant in the Poincar\'e inequality does not 
depend on $n$ (see Lemma \ref{lem:poincare} below). Thus, if we denote the constant by $\sqrt{c_1}$, we have
\begin{equation}\label{eq:ocj}
\begin{aligned}
\alpha c_1 \|\widetilde v^n_i\|^2_{\Hj{D}}&\leq \alpha c_1 c_0\|v^n_i\|^2_{\Hj{\Omega_n}} \\
&\leq  c_0 \int_{\Omega_n} \mA_n\nabla v^n_i\cdot\nabla v^n_i \,d\mx
	= c_0 \int_{\Omega_n}      f_i v^n_i -\mA_n\nabla H_i\cdot\nabla v^n_i\,d\mx\\
&\leq  c_0 \|f_i\|_{\Ld{D}}\|\widetilde v_i^n\|_{\Ld{D}} 
	+ c_0 \beta \|\nabla H_i\|_{\Ld{D;\R^d}}\| 
	\nabla \widetilde v_i^n\|_{\Ld{D;\R^d}}\\
&\leq c_2\|\widetilde v_i^n\|_{\Hj{D}}\,, 
\end{aligned}
\end{equation}
where $c_2$ does not depend on $n$. 
Hence, the boundedness of the sequence $(\widetilde\vv^n)$ in $\Hj{D;\R^m}$ follows. Therefore, now we can easily adapt Step II of the previous proof and obtain its weak limit $\widetilde\vv\in\Hj{D;\R^m}$ (at a subsequence), and denote  $\vv:=\widetilde\vv|_\Omega\in\Hj{\Omega;\R^m}$. Since the trace operator is linear and continuous, it follows that 
$\vv=\vnul$ on $\Gamma_0$, i.e.~each component of $\vv$ belongs to $V$ (see \eqref{eq:neum_V}). At the same subsequence we have weak convergence of $\widetilde\vu^n:=\widetilde\vv^n+\vH$ to $\widetilde\vu:=\widetilde\vv+\vH$, and in the same manner we denote  $\vu:=\widetilde\vu|_\Omega$, belonging to the desired space $\vH+V^m$.

The next step is to pass to the limit in \eqref{eq:weakneun} (and then also in \eqref{eq:neumann_u_n}), which is more delicate when compared to the procedure 
used in the previous theorem. 

For $\eta>0$ we introduce
\[
\begin{aligned}
    \omega_\eta&=\{\mx\in \Omega:d(\mx;\partial\Omega)\geq\eta\}\\
    \Gamma_\eta&=\{\mx\in \R^d:d(\mx;\partial\Omega)<\eta\}.
\end{aligned}
\]

The compact set $\omega_\eta$ is included in $\Omega$ by definition, 
and for any positive $\eta$ we have $\overline{\Omega}\subseteq\Gamma_\eta\,\dot\cup\,\omega_\eta$.
Therefore, since $(\Omega_n)$ converges to $\Omega$ in the sense of 
compacts, for $n$ large enough we have
\[
\begin{aligned}
    \omega_\eta&\ssubset\Omega_n\\
   \Omega_n&\subseteq (\Gamma_\eta \,\dot\cup\,\omega_\eta)\cap D
   	= (\Gamma_\eta\cap D)\,\dot{\cup}\, \omega_\eta \,,
\end{aligned}
\]
where we in addition used that $\Omega_n\subseteq D$. 

We aim to pass to the limit in the equality \eqref{eq:weakneun}, which can be equivalently rewritten as follows:
\begin{equation}\label{eq:weakneuneq}
\int_{D} \chi_{\Omega_n}\widetilde\mA_n\nabla \widetilde u^n_i\cdot\nabla\varphi\,d\mx=\int_{D} \chi_{\Omega_n} f_i\varphi\,d\mx \,, %\,,\;\varphi\in V_n\,.
\end{equation}
for any $\varphi\in{\rm C}^\infty(\overline{D})$ that vanish on a neighborhood of $\Gamma_0$. Here we make use of the fact that the set of all such functions, when restricted to $\Omega_n$ is dense in $V_n$ \cite[Theorem 3.1]{Bern}.

The right-hand side of the previous equality converges to $\int_{D} \chi_{\Omega} f\varphi_i\,d\mx$, since $\Omega_n$ converges to $\Omega$ in the sense of characteristic functions.

Furthermore, the integral on the left-hand side of \eqref{eq:weakneuneq} is the sum of the same integrands over $\omega_\eta$ and $\Gamma_\eta\cap D$.
Again by \cite[Proposition 1.2.19]{All02} we have the convergence
$\mA_n|_{\omega_\eta}\nabla (u_i^n)|_{\omega_\eta}\rightharpoonup \mA|_{\omega_\eta}\nabla (u_i)|_{\omega_\eta}$ weakly
in $\Ld{\omega_\eta;\R^d}$ which implies the convergence
\[
\begin{aligned}
\int_{\omega_\eta} \chi_{\Omega_n}\mA_n\nabla u^n_i\cdot\nabla\varphi\,d\mx &=\int_{\omega_\eta}\mA_n\nabla u^n_i\cdot\nabla\varphi\,d\mx\to
\int_{\omega_\eta} \mA\nabla u_i\cdot\nabla\varphi\,d\mx\\
&=\int_{\omega_\eta} \chi_{\Omega}\mA\nabla u_i\cdot\nabla\varphi\,d\mx\,.
\end{aligned}
\]
Finally, for the integral over $\Gamma_\eta$ we have
\[
\begin{aligned}
\Biggl|\int_{\Gamma_\eta\cap D} & \chi_{\Omega_n}\widetilde\mA_n\nabla\widetilde u^n_i\cdot\nabla\varphi\,d\mx -\int_{\Gamma_\eta\cap D} \chi_{\Omega}\widetilde\mA\nabla \widetilde u_i\cdot\nabla\varphi\,d\mx
\Biggr|\\
&\leq\int_{\Gamma_\eta\cap D} |\widetilde\mA_n\nabla \widetilde u^n_i\cdot\nabla\varphi|\,d\mx +\int_{\Gamma_\eta\cap D} |\widetilde\mA\nabla \widetilde u_i\cdot\nabla\varphi|\,d\mx\\
&\leq\|\widetilde\mA_n\nabla \widetilde u^n_i\|_{\Ld{D;\R^d}}\|\nabla \varphi\|_{\Ld{\Gamma_\eta\cap D;\R^d}}+
\|\widetilde\mA\nabla \widetilde u_i\|_{\Ld{D;\R^d}}\|\nabla \varphi\|_{\Ld{\Gamma_\eta\cap D;\R^d}}\\
&\leq \beta \|\nabla\varphi\|_{\Lb{D;\R^d}}\bigl(\|\widetilde u_i^n\|_{\Hj D}
	+\|\widetilde u_i\|_{\Hj D}\bigr) \sqrt{|\Gamma_\eta\cap D|} \\
&\leq c\sqrt{|\Gamma_\eta|}\,,
\end{aligned}
\]
where constant $c$ does not depend on $n$. 
Since $|\Gamma_\eta|$ converges to zero as $\eta$ goes to zero, 
we can pass to the limit in \eqref{eq:weakneuneq}  and conclude
\begin{equation*}
\int_{\Omega} \mA\nabla u_i\cdot\nabla\varphi\,d\mx=\int_{\Omega}  f_i\varphi\,d\mx\,,\ i=1,\ldots,m\,,
\end{equation*}
for any $\varphi\in{\rm C}^\infty(\overline{D})$ that vanish on a neighborhood of $\Gamma_0$, which means that the limit $\vu$ solves \eqref{state1}.

Finally, we apply Step IV of the previous proof directly, which completes the proof.
\end{proof}

Here we provide an important uniform Poincaré inequality for admissible domains. 
The strategy of the proof is standard and relies on Chenais's extension 
(see e.g.~\cite[Lemma 2.19]{HM03}); 
however, for completeness, we provide the full proof.

\begin{lemma}\label{lem:poincare}
There exists a constant $c>0$ such that for any admissible domain
$\Omega\in \mathcal{O}$ and $u\in V=\{v\in\Hj{\Omega}:{v}|_{\Gamma_0}=0\}$ it holds
$$
c\,\|u\|_{\Hj\Omega}\leq \|\nabla u\|_{\Ld{\Omega;\R^d}} \,.
$$
\end{lemma}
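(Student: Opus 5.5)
We argue by contradiction and combine Chenais's uniform extension with compactness. Suppose no such $c$ exists. Then there are admissible domains $\Omega_n\in\mathcal{O}$ and functions $u_n\in V_n:=\{v\in\Hj{\Omega_n}: v|_{\Gamma_0}=0\}$ with $\|u_n\|_{\Hj{\Omega_n}}=1$ and $\|\nabla u_n\|_{\Ld{\Omega_n;\R^d}}\to 0$.

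By the $\eps$-cone property and \cite{Chenais}, there is an extension operator $P_n:\Hj{\Omega_n}\to\Hj{D}$ whose norm is bounded by a constant $c_0$ depending only on $\eps$ (and $D$). This is \eqref{eq:chenais}. Set $\tilde u_n:=P_n u_n$; then $\|\tilde u_n\|_{\Hj D}\le \sqrt{c_0}$. As in Step I of the proof of Theorem \ref{tm:dir}, \cite[Theorem 2.4.10]{HP18} gives a subsequence and a limit $\Omega\in\mathcal{O}$ such that $\Omega_n\to\Omega$ in the Hausdorff sense, in the sense of characteristic functions and in the sense of compacts. Passing to a further subsequence, $\tilde u_n\rightharpoonup\tilde u$ weakly in $\Hj D$ and, by Rellich--Kondrašov, strongly in $\Ld D$. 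The Rellich step is legitimate because $D$ may be replaced by a large ball containing it, with $P_n$ composed with a fixed extension.

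Next we identify the limit. For $U\ssubset\Omega$ we have $U\ssubset\Omega_n$ for large $n$. Hence $\nabla\tilde u_n=\nabla u_n\to0$ in $\Ld U$, and the weak limit satisfies $\nabla\tilde u=0$ on $U$. Since $\Omega$ is connected, $u:=\tilde u|_\Omega$ is a constant $k$. To show $k=0$ we use Lemma \ref{lem:O}. On one component $O^i$, each $\Omega_n\cap O^i$ equals $O^i_-$ or $O^i_+$, so after a further subsequence it is a fixed connected set $O^i_\sigma$, which is also contained in $\Omega$. On $O^i_\sigma$ we have $u_n\to k$ in $\Ld{O^i_\sigma}$ and $\nabla u_n\to 0$, so $u_n\to k$ in $\Hj{W}$ for any Lipschitz open set $W\subset O^i_\sigma$ adjacent to a piece of $\Gamma_0^i$. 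For example, take $W=B(\mx,\eta_\mx)\cap O^i_\sigma$, where Lemma \ref{lem:O} guarantees that $\Gamma_0^i$ is a Lipschitz graph. Continuity of the trace on $W$ gives $k=u_n|_{\Gamma_0}\to k$ on $\Gamma_0\cap \partial W$, while $u_n|_{\Gamma_0}=0$; hence $k=0$.

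Finally we obtain the contradiction by showing $\|u_n\|_{\Ld{\Omega_n}}\to0$. We write
\[
\|u_n\|_{\Ld{\Omega_n}}\le\|\tilde u_n-\tilde u\|_{\Ld D}+\|\tilde u\|_{\Ld{\Omega_n\setminus\Omega}}+\|\tilde u\|_{\Ld{\Omega}} .
\]
The first term vanishes by strong convergence. The second vanishes because $|\Omega_n\setminus\Omega|\to0$ and $\tilde u$ is a fixed $\Ld D$ function. The third is zero since $\tilde u=0$ on $\Omega$. Combined with $\|\nabla u_n\|\to0$, this contradicts $\|u_n\|_{\Hj{\Omega_n}}=1$.

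The main obstacle is the step $k=0$, that is, transferring the boundary condition on $\Gamma_0$ to the limit uniformly in $n$. It is handled by working in the fixed neighbourhood from Lemma \ref{lem:O}, where all domains coincide near $\Gamma_0$ and the trace operator is a single fixed continuous map.
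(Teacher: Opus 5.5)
Your proof is correct and follows essentially the same route as the paper's: argue by contradiction, use Chenais's uniform extension and the compactness of $\varepsilon$-cone domains, show the limit is constant on the connected set $\Omega$, use the condition on $\Gamma_0$ to force that constant to be zero, and contradict the normalization. The differences are only cosmetic. You normalize in $\Hj{\Omega_n}$ rather than $\Ld{\Omega_n}$, and you localize the trace argument in the fixed neighbourhood from Lemma \ref{lem:O}, whereas the paper passes to the limit directly in the traces of the extensions $\widetilde u_n$ on $\Gamma_0$ using weak sequential continuity of the trace.
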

\begin{proof}
Let us assume that the claim does not hold. 
Then for any $n\in\N$ there exist $\Omega_n\in\mathcal{O}$ and 
$u_n\in V_n = \{v\in\Hj{\Omega_n}:{v}|_{\Gamma_0}=0\}$ such that 
$\|u_n\|_{\Ld{\Omega_n}}=1$ and
$$
\frac{1}{n} \|u_n\|_{\Hj{\Omega_n}} > \|\nabla u_n\|_{\Ld{\Omega_n;\R^d}} \,.
$$
Hence, since $\|u_n\|_{\Hj{\Omega_n}}^2=1 + \|\nabla u_n\|_{\Ld{\Omega_n;\R^d}}^2$,
we get 
$\|\nabla u_n\|_{\Ld{\Omega_n;\R^d}} < \frac{1}{\sqrt{n^2-1}}$, $n\geq 2$,
implying 
\begin{equation*}
\lim_{n\to\infty} \|\nabla u_n\|_{\Ld{\Omega_n;\R^d}} =0 \,.
\end{equation*}
In particular, the sequence of real numbers $(\|u_n\|_{\Hj{\Omega_n}})$ is bounded. 

Let us denote by $\widetilde u_n$ the extension of $u_n$ following the method
presented by Chenais \cite{Chenais}. 
Due to the uniform cone condition, there exists a constant 
$c_0>0$ independent of $n$ such that 
\begin{equation*}
\|\widetilde u_n\|_{\Hj{D}} \leq c_0 \|u_n\|_{\Hj{\Omega_n}} \,, \quad n\in\N \,.
\end{equation*}
Thus, $(\widetilde u_n)$ is bounded in $\Hj{D}$.
Let us pass to a subsequence (keeping the same notation) such that 
$(\widetilde u_n)$ converges to $v$ weakly in $\Hj{D}$ and 
strongly in ${\rm L}^q(D)$ for any $q\in [1,q^\ast)$ 
(see \cite[Theorem 6.3]{AF03}), 
where $q^\ast$ is defined in (A5).

According to \cite[Theorem 2.4.10]{HP18}
(see also \cite{Chenais}), we can pass to another subsequence such that
$\Omega_n$ converges to $\Omega\in\mathcal{O}$ 
in the sense of Hausdorff (for open sets) and
in the sense of characteristic functions.

Let us take an arbitrary $\varphi\in\Cbc{D}$. 
Using the Cauchy-Schwarz inequality, it holds 
\begin{equation*}
\left|\int_{\Omega_n} \varphi(\mx)  \nabla u_n(\mx) \, d\mx\right|
\leq \|\nabla u_n\|_{\Ld{\Omega_n;\R^d}} \|\varphi\|_{\Ld{D}}
\xrightarrow{n\to\infty} 0 \,.
\end{equation*}
On the other hand, since $(\chi_{\Omega_n})$ converges strongly to $\chi_\Omega$ in 
$\Ld{D}$ and $(\nabla \widetilde u_n)$ converges weakly to 
$\nabla v$ in $\Ld{D;\R^d}$, we have
\begin{equation*}
\begin{aligned}
0=\lim_n \int_{\Omega_n} \varphi(\mx)  \nabla u_n(\mx) \, d\mx
	&= \lim_n \int_D \varphi(\mx) \chi_{\Omega_n}(\mx) \nabla\widetilde u_n(\mx) \, d\mx \\
&=  \int_D\varphi(\mx) \chi_\Omega(\mx) \nabla v(\mx) \,d\mx \\
&=  \int_\Omega \varphi(\mx) \nabla v(\mx) \,d\mx \,.
\end{aligned}
\end{equation*}
Since $\varphi\in\Cbc{D}$ was arbitrary, we get that $\nabla v=\vnul$ a.e.~in $\Omega$. 
Thus, $v$ is a constant function on $\Omega$ (recall that $\Omega$ is connected).

Further on, since the trace is weakly sequentially continuous, 
from $\widetilde u_n|_{\Gamma_0} = u_n|_{\Gamma_0}=0$, follows $v|_{\Gamma_0}=0$. 
Thus, $v=0$ a.e.~in $\Omega$.

However, since $(\widetilde u_n^2)$ converges strongly in ${\rm L}^p(D)$ for some $p>1$
(in fact, $p$ is any number in the interval $(1,\frac{q^\ast}{2})$)
and $(\chi_{\Omega_n})$ converges strongly in ${\rm L}^r(D)$ for any 
$r\in [1,\infty)$, we can pass to the limit:
\begin{equation*}
1= \|u_n\|_{\Ld{\Omega_n}}^2 = \int_D \chi_{\Omega_n}(\mx) \bigl(\widetilde u_n(\mx)\bigr)^2
	\, d\mx \xrightarrow{n\to\infty} \int_\Omega v(\mx)^2 \,d\mx =0 \,,
\end{equation*}
where we have used that $v=0$ a.e.~in $\Omega$.

Therefore, we have obtained a contradiction,
which ensures that the statement of the lemma must hold.
\end{proof}

\begin{remark}
The volume constraint $|\Omega|=V$ present in the definition  of the set of admissible domains $\mathcal{O}$ (see \eqref{assump1}) is irrelevant to the assertion of the previous lemma and can be omitted.
\end{remark}

\subsection{General boundary conditions}

Let us now consider the general case, where both $\Gamma_1$ and $\Gamma_2$ are (possibly) nonempty. 
For potential practical implementations, we naturally introduce subsets $Q_1$ and $Q_2$ of the domain $D$ which determine where the Dirichlet or Neumann boundary conditions are applied.
More precisely, for a given admissible domain $\Omega$, we will define
$\Gamma_i = \partial\Omega\cap Q_i$, $i=1,2$.
 Nevertheless, as the subsequent example demonstrates, additional assumptions concerning the boundary sets $\Gamma_1$ and $\Gamma_2$ (i.e.~on the family of admissible domains) are necessary.

\begin{example}\label{ex:gen}

    Let $D=\langle-2,2\rangle\times \langle-2,2\rangle$, with $Q_1=[-2,2]\times\langle0,2]$ and $Q_2=[-2,2]\times[-2,0]$, and $\Gamma_0=[0,1]\times\{1\}$.
We introduce a sequence of boundary value problems on 
$\Omega_n=\langle0,1\rangle\times\langle1/n,1\rangle$:
    \begin{equation}\label{state2}
 \left\{ \begin{array}{cc} -\Delta u^n=0& \text{ in }\Omega_n\\ 
u^n(x,y)=\sin \pi x &\text{ on }\Gamma_0\\
u^n=0 &\text{ on }\Gamma_1^n=\partial\Omega_n\setminus\Gamma_0.
 \end{array} \right.
\end{equation}

The sequence of domains $(\Omega_n)$ converges to $\Omega = \langle 0,1 \rangle \times \langle 0,1 \rangle$ in the Hausdorff sense, which implies that the boundary condition on the lower side is altered. The limiting boundary value problem states:
    \begin{equation}\label{state3}
 \left\{ \begin{array}{cc} -\Delta u=0& \text{ in }\Omega\\ 
u(x,y)=\sin \pi x &\text{ on }\Gamma_0\\
u=0 &\text{ on }\Gamma_1=\left(\{0\}\times\langle0,1]\right)\cup\left(\{1\}\times\langle0,1]\right)\\
\nabla u\cdot\mn =0 &\text{ on }\Gamma_2=[0,1]\times\{0\}.
 \end{array} \right.
\end{equation}

It is straightforward to solve the problem using the Fourier separation method. One observes that the unique solution of \eqref{state2} is 
\[
u^n(x,y)=\frac{\sh\pi\left(y-\frac1n\right)}{\sh\pi\left(1-\frac1n\right)}\sin\pi x,
\]
converging uniformly (on any compact subset of $\mathbb{R}^2$) to
 $\displaystyle\frac{\sh\pi y}{\sh\pi}\sin\pi x$, whereas the unique solution of \eqref{state3} is
\[
u(x,y)=\displaystyle\frac{\ch\pi y}{\ch\pi}\sin\pi x.
\]
Hence, it is impossible for any extension of $u^n$ to converge weakly in $\HH^1$ to an 
extension of $u$, which is essential
for the validity of the preceding proofs.
\end{example}

To avoid unwanted phenomena, we narrow the set of admissible domains by introducing an additional assumption:
\begin{itemize}
\item[(A6)] $Q_1,Q_2\subseteq \overline{D}$ are given disjoint compact sets such that $\overline{\Gamma_0}\setminus(Q_1\cup Q_2) = \Gamma_0$, where $\Gamma_0$ is given in (A3). 
\end{itemize}
This assumption ensures a clear separation between the variable
parts of the Dirichlet and Neumann boundary conditions, 
thereby avoiding the pathological scenario described in  Example \ref{ex:gen}.  
Furthermore, the condition $\overline{\Gamma_0}\setminus(Q_1\cup Q_2) = \Gamma_0$ 
implies that the boundary of the fixed region, 
$\partial\Gamma_0$, is entirely contained in $\partial Q_1\cup \partial Q_2$.
%Finally, it is important to notice that it is allowed for both $Q_1$ and $Q_2$ to be empty sets (see Remark \ref{rem:Q1Q2}).
Finally, note that $Q_1$ and $Q_2$ may be empty, but not simultaneously (see Remark \ref{rem:Q1Q2}).

Since we will use that $\Gamma_i=\partial\Omega\cap Q_i$, $i=1,2$, 
a necessary condition is $\partial\Omega\setminus (Q_1\cup Q_2)=\Gamma_0$.
This condition we need to include in the set of admissible designs. Hence, the new sets of \emph{admissible domains} and \emph{designs} 
reads, respectively,
\begin{equation*}
\begin{aligned}
\mathcal{O}' = \Bigl\{\Omega\subseteq D : \Omega\subseteq D & \text{ is open, connected and satisfies the $\varepsilon$-cone property}, \\
& |\Omega|=V, \; \Gamma_0\subset \partial\Omega, \;
\partial\Omega\setminus (Q_1\cup Q_2)= \Gamma_0 \Bigr\}
\end{aligned}
\end{equation*}
and
\begin{equation}\label{eq:addmis_mixed}
\begin{aligned}
\mathcal{A'} =\Bigl\{(\Omega,\theta,\mA): \Omega\in \mathcal{O}', \; \theta\in\Lb{\Omega;[0,1]}, \; \int_\Omega \theta\, d\mx =q_\alpha, \; \mA\in \Kth \: \text{ a.e. in }\Omega\Bigr\} \,.
\end{aligned}
\end{equation}
In order to ensure that the set of admissible designs is non-empty, 
we need to strengthen assumption (A4):
\begin{itemize}
\item[(A4$'$)] There exists $\widehat\Omega\in\mathcal{O}'$. 
\end{itemize}

Finally, we impose the following assumption on functions $h_i$:
\begin{itemize}
	\item[(H3)] For any $i\in\{1,2,\dots, m\}$ there exists $H_i\in \HH^1(D)$ such that $h_i=H_i|_{\Gamma_0}$ and $H_i=0$ a.e.~in $Q_1$.
\end{itemize}

For a given admissible design $(\Omega,\theta,\mA)\in\mathcal{A}'$, 
let $\vu$ denote the unique solution of \eqref{state1}, 
where  the boundary segments are defined by $\Gamma_i=\partial\Omega\cap Q_i$ for
$i=1,2$.
By this definition of $\Gamma_i$, $i=1,2$, and the definition of $\mathcal{O}'$ we indeed have
$$
\begin{aligned}
\partial\Omega &= (\partial\Omega\cap Q_1) \,\dot{\cup}\, (\partial\Omega\cap Q_2)
	\,\dot{\cup}\, \bigl(\partial\Omega\setminus(Q_1\cup Q_2)\bigr) \\
&= \Gamma_1 \,\dot{\cup}\, \Gamma_2 \,\dot{\cup}\, \Gamma_0 \,.
\end{aligned}
$$
Hence, we are interested in solving the optimal design problem
\eqref{RelaxedProblem}, with $\mathcal{A}$  replaced by $\mathcal{A}'$.

We have the following result.
\begin{theorem}\label{tm_gen}
%\mbox{}\\
Under assumptions {\rm (A1)--(A3)}, {\rm (A4$'$)}, {\rm (A5), (A6)},
and {\rm (H3)},
the optimal design problem \eqref{RelaxedProblem}
admits a solution, where the set of admissible designs $\mathcal{A}$
is replaced by $\mathcal{A}'$ (cf. \eqref{eq:addmis_mixed}) and, 
in \eqref{state1}, the boundary segments are defined by $\Gamma_i=\partial\Omega\cap Q_i$ for $i=1,2$.
\end{theorem}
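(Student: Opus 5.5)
We follow the four-step scheme of Theorem \ref{tm:dir}, combined with the extension and limit-passing techniques of Theorem \ref{tm:neu}, and describe only the new ingredients.

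\textbf{Step I (compactness of designs).} Take a minimizing sequence $(\Omega_n,\theta_n,\mA_n)\in\mathcal{A}'$. As in Step I of Theorem \ref{tm:dir}, we extract a subsequence with $\Omega_n\to\Omega$ in the Hausdorff, characteristic-function and compact senses, and with $\partial\Omega_n\to\partial\Omega$ in the Hausdorff sense for compact sets. We also obtain $\tilde\theta_n\rightharpoonup\tilde\theta$ weakly-$*$ and $\widetilde\mA_n$ H-converging to $\widetilde\mA$. Connectedness, the $\eps$-cone property, the volume constraint, $\Gamma_0\subseteq\partial\Omega$ and $\int_\Omega\theta=q_\alpha$ follow verbatim.

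The new condition to check is $\partial\Omega\setminus(Q_1\cup Q_2)=\Gamma_0$. By Lemma \ref{lem:O}, $\partial\Omega_n\setminus\Gamma_0\subseteq\overline D\setminus O$. Together with $\partial\Omega_n\setminus\Gamma_0\subseteq Q_1\cup Q_2$, this gives $\partial\Omega_n\subseteq\overline{\Gamma_0}\cup Q_1\cup Q_2$, which is a compact set. Hausdorff convergence then yields $\partial\Omega\subseteq\overline{\Gamma_0}\cup Q_1\cup Q_2$. Since (A6) gives $\overline{\Gamma_0}\setminus(Q_1\cup Q_2)=\Gamma_0$, we conclude $\partial\Omega\setminus(Q_1\cup Q_2)\subseteq\Gamma_0$. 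The reverse inclusion follows from $\Gamma_0\subseteq\partial\Omega$ and $\Gamma_0\cap(Q_1\cup Q_2)=\emptyset$.

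\textbf{Step II (uniform bounds).} Set $V_n:=\{v\in\Hj{\Omega_n}: v|_{\Gamma_0\cup\Gamma_1^n}=0\}$, where $\Gamma_1^n=\partial\Omega_n\cap Q_1$, and write $u_i^n=H_i|_{\Omega_n}+v_i^n$ with $v_i^n\in V_n$. This is legitimate by (H3): $H_i=0$ a.e.\ in $Q_1$ gives a vanishing trace on $\Gamma_1^n$, using that $Q_1$ contains a neighbourhood piece of $\Omega_n$ near $\Gamma_1^n$, or arguing via quasi-continuous representatives. Since $V_n\subseteq\{v: v|_{\Gamma_0}=0\}$, Lemma \ref{lem:poincare} applies with a uniform constant. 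Using Chenais' extension \eqref{eq:chenais}, the estimate \eqref{eq:ocj} then gives boundedness of $\widetilde\vv^n$ in $\Hj{D;\R^m}$, and we extract weak limits $\widetilde\vv$ and $\widetilde\vu=\widetilde\vv+\vH$.

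\textbf{Step III (identifying the limit).} This step is the main obstacle. We need $\vv|_\Omega$ to vanish on $\Gamma_1=\partial\Omega\cap Q_1$, and the test functions must be handled despite the moving Dirichlet part. The separation in (A6) is what prevents the pathology of Example \ref{ex:gen}.

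For the Dirichlet trace, we use that $Q_1$ and $Q_2$ are disjoint compacts, hence at positive distance $\rho$. Near $\Gamma_1$, that is in the $\rho/2$-neighbourhood of $Q_1$, the boundaries $\partial\Omega_n$ eventually lie in $Q_1\cup\overline{\Gamma_0}$, where $v^n_i$ vanishes. Take a cutoff $\zeta$ equal to $1$ near $Q_1$ and supported away from $Q_2$. Then $\zeta v_i^n\in\Hjn{\Omega_n\cup\ldots}$ behaves as in the Dirichlet setting: $\zeta v_i^n$, extended by zero, lies in $\Hjn{D}$. Arguing as in Step III of Theorem \ref{tm:dir}, its limit vanishes a.e.\ outside $\Omega$. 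Combined with the Lipschitz regularity of $\Omega$ and \cite[Proposition 3.2.16]{HP18}, this gives zero trace on $\Gamma_1$.

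For the variational equation, we take test functions $\varphi\in{\rm C}^\infty(\overline D)$ vanishing in a neighbourhood of $\overline{\Gamma_0}\cup Q_1$. These are dense in the limit space by \cite{Bern}, using the Lipschitz structure of $\partial M\cup(\partial\Omega\cap\partial Q_1)$. Such $\varphi$ are admissible for every $\Omega_n$. We then pass to the limit by splitting into $\omega_\eta$ and $\Gamma_\eta$, exactly as in Theorem \ref{tm:neu}. The one delicate point is the density claim near the interface $\overline{\Gamma_1}\cap\overline{\Gamma_0}$. If needed, we will restrict to $\varphi$ vanishing near $\overline{\Gamma_0}\cup\Gamma_1$ and invoke \cite{BLD}.

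\textbf{Step IV (lower semicontinuity).} This step is identical to Step IV of Theorem \ref{tm:dir}, applied to $\widetilde\vu_n\to\widetilde\vu$ a.e.\ with the same dominating function.
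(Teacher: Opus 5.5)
\textbf{Gap: the test functions in Step III.} Functions vanishing in a neighbourhood of $\overline{\Gamma_0}\cup Q_1$ are \emph{not} dense in $W=\{v\in\Hj{\Omega}: v|_{\Gamma_0\cup\Gamma_1}=0\}$. The compact set $Q_1$ has to contain every moving Dirichlet boundary $\Gamma_1^n$, so in general it meets $\Omega$ in a set of positive measure, and all your test functions vanish there. In the paper's own reduction of the Dirichlet case (Remark~\ref{rem:Q1Q2}), $Q_1=\overline D\setminus O$ contains almost all of $\Omega$. Your class would then test the limit equation only against functions supported in the thin set $O$, so $\vu$ is not identified as the solution of \eqref{state1}.

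You also misplace the density issue. Because $Q_1\cap Q_2=\emptyset$, the only Dirichlet--Neumann interface is $\overline{\Gamma_0}\cap\Gamma_2\subseteq\partial M$. It is not $\partial\Omega\cap\partial Q_1$ or $\overline{\Gamma_1}\cap\overline{\Gamma_0}$, and this is what makes \cite[Theorem 3.1]{Bern} applicable.

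\textbf{Fix.} Test with $\varphi$ vanishing on an open neighbourhood $U$ of $\Gamma_0\cup\Gamma_1$, where $\Gamma_1=\partial\Omega\cap Q_1$. These are dense in $W$, but they are no longer admissible for every $\Omega_n$. The missing ingredient is that $\Gamma_1^n=\partial\Omega_n\cap Q_1\subseteq U$ for all large $n$. The paper obtains this from the Hausdorff convergence $\Gamma_1^n\to\Gamma_1$. The needed direction is elementary:
\begin{itemize}
\item If infinitely many $\Gamma_1^n$ met $\R^d\setminus U$, compactness of $\overline D$ would give points $\mx_n\in\Gamma_1^n\setminus U$ converging along a subsequence to some $\mx$.
\item Then $\mx\in\partial\Omega$ by Hausdorff convergence of the boundaries, $\mx\in Q_1$ since $Q_1$ is closed, and $\mx\notin U$ since $U$ is open.
\item So $\mx\in\Gamma_1\setminus U$, a contradiction.
\end{itemize}
Your fallback ``restrict to $\varphi$ vanishing near $\overline{\Gamma_0}\cup\Gamma_1$'' actually enlarges the class rather than restricting it, and it omits exactly this step.

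\textbf{The rest is sound.} Your Step I argument, via $\partial\Omega_n\subseteq\Gamma_0\cup Q_1\cup Q_2$ and stability of inclusions under Hausdorff convergence, is equivalent to the paper's pointwise argument. For the Dirichlet trace on $\Gamma_1$, your cutoff $\zeta$ is the paper's $\varphi_1$. You take the Chenais extension $\widetilde v_i^n$ of all of $v_i^n$, rather than the paper's glued extension $\varphi_1\widetilde\vw^n+\varphi_2\widetilde\vz^n$. That route works and is slightly simpler, provided you state explicitly that the zero extension of $\zeta v_i^n$ equals $\chi_{\Omega_n}\zeta\widetilde v_i^n$. Its a.e.\ limit is then $\chi_\Omega\zeta\widetilde v_i$, which gives $\zeta v_i\in\Hjn{\Omega}$.
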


\begin{remark}\label{rem:Q1Q2}
This final general setting encompasses both previously addressed ones. 
Here we rely on Lemma \ref{lem:O}.

More precisely, for the Dirichlet problem discussed in Theorem \ref{tm:dir},
we set $Q_1=\overline{D}\setminus O$ and $Q_2=\emptyset$.
Under this choice, assumption {\rm (A6)} is automatically satisfied by
Lemma \ref{lem:O}, as is the additional condition
$$
\partial\Omega\setminus(Q_1\cup Q_1)=\partial\Omega\cap O = \Gamma_0 \,.
$$
Hence, we have $\mathcal{O}'=\mathcal{O}$ and
$\mathcal{A}'=\mathcal{A}$.

For the mixed-problem with the fixed Dirichlet part studied in Theorem \ref{tm:neu}
we just take the reverse choice: $Q_1=\emptyset$ and $Q_2=\overline{D}\setminus O$.
Therefore, Theorem \ref{tm_gen} is indeed more general when compared to 
Theorems \ref{tm:dir} and \ref{tm:neu}.

In terms of admissible domains and their corresponding boundary conditions, the setting of Theorem \ref{tm_gen} is quite general. In particular, it covers problems of Bernoulli type \cite{HKKP03} as well as pipe-like (tubular) domains \cite{Privat10}. 
However, at this level of generality, we cannot allow the moving portions of the Dirichlet and Neumann boundaries to meet (see Example \ref{ex:gen}). To accommodate such cases, more restrictive assumptions on the admissible domains must be imposed, as seen in \cite[Chapter 2]{HM03}.
\end{remark}

\begin{proof}[Proof of Theorem \ref{tm_gen}]

We employ the same steps used in the previous proofs. 

Since $\mathcal{A}'\subseteq \mathcal{A}$, for a minimizing sequence 
$(\Omega_n,\theta_n,\mA_n)\in\mathcal{A}'$ following the construction from Step I 
we obtain $(\Omega,\theta,\mA)\in\mathcal{A}$.
Hence, it is left to check that $\partial\Omega\setminus(Q_1\cup Q_2)=\Gamma_0$. 

The inclusion $\Gamma_0\subseteq\partial\Omega$ is guaranteed by the definition 
of the set $\mathcal{A}$. Thus, by (A6) we have
$$
\Gamma_0 = \Gamma_0\setminus(Q_1\cup Q_2)\subseteq \partial\Omega\setminus(Q_1\cup Q_2) \,.
$$ 
For the converse inclusion, we start with an arbitrary $\mx\in \partial\Omega\setminus(Q_1\cup Q_2)$.
Since $(\partial\Omega_n)$ converges to $\partial\Omega$ in the sense of Hausdorff
(for compact sets), there exists $\mx_n\in\partial\Omega_n$ such that $(\mx_n)$ converges 
to $\mx$ (cf.~\cite[Subsection 2.2.3.2]{HP18}). Using that the set $Q_1\cup Q_2$ is closed, 
for $n$ large enough we have $\mx_n\in \partial\Omega_n\setminus (Q_1\cup Q_2)=\Gamma_0$. 
Thus, $\mx\in \overline{\Gamma_0}$, which leads to the identity:
$$
\mx\in \bigl(\partial\Omega\setminus(Q_1\cup Q_2)\bigr)\cap\overline{\Gamma_0}
	= \overline{\Gamma_0}\setminus(Q_1\cup Q_2) = \Gamma_0 \,.
$$
The steps are justified by $\overline{\Gamma_0}\subseteq\partial\Omega$
(first equality) and assumption (A6) (second equality).
Therefore, we indeed have $(\Omega,\theta,\mA)\in\mathcal{A}'$.

Before proceeding to the next step, we first show that $\Gamma_1^n=\partial\Omega_n\cap Q_1$
converges, up to  a subsequence,
in the sense of Hausdorff for compact sets to
$\Gamma_1=\partial\Omega\cap Q_1$. 
Let us denote by $L$ the Hausdorff limit of $\Gamma_1^n$. Since the inclusion is preserved under
Hausdorff convergence for compact sets (cf.~\cite[Subsection 2.2.3.2]{HP18}),
we have $L\subseteq\Gamma_1$. In general, the intersection is not continuous with 
respect to Hausdorff convergence for compact sets.
However, the specific structure of our sets allows us to handle this issue.
Let us take $\mx\in\Gamma_1\subseteq\partial\Omega$. Since $(\partial\Omega_n)$
converges in the sense of Hausdorff (for compact sets) to $\partial\Omega$, 
by \cite[(2.14)]{HP18} there exists a sequence $(\mx_n)$ that converges to 
$\mx$ and such that $\mx_n\in\partial\Omega_n$. 
If in addition we have $\mx_n\in Q_1$, then using the same characterisation we get
$\mx\in L$ (of course, it is sufficient to have a subsequence of $(\mx_n)$ that 
it is contained in $Q_1$). Hence, it is left to analyse the case where 
$\mx_n\not\in Q_1$, for all but finitely many $n$.   
Since $Q_1$ and $Q_2$ are disjoint compact sets (see (A6)), for sufficiently 
large $n$ all $\mx_n$ lie on $\overline{\Gamma_0}$. 
This in turn implies $\mx\in\overline{\Gamma_0}$.
Consequently, we have 
$$
\mx\in\partial\Omega\cap  Q_1 \cap\overline{\Gamma_0}
	=   Q_1 \cap\overline{\Gamma_0}
	\subseteq Q_1\cap\partial\Omega_n = \Gamma_1^n \,,
$$
where we have used that for any $n\in\N$ it holds 
$\overline{\Gamma_0}\subseteq \partial\Omega\cap \partial\Omega_n$.
Then we can conclude that $\mx\in L$, and in particular $L=\Gamma_1$.

In the next step we study the boundary value problem (Step II in the proof of Theorem \ref{tm:dir}).
We start as before with homogenizing the Dirichlet boundary conditions. 
And the rational is completely analogous as in the proof of Theorem \ref{tm:neu}.
More precisely, for any $i\in\{1,2,\dots,m\}$ and any $n\in\N$, 
there is a unique solution $v_i^n$ to the following variational problem:
\begin{equation}\label{eq:weak_gen}
\int_{\Omega_n} \mA_n\nabla (v^n_i+H_i)\cdot\nabla\varphi\,d\mx=\int_{\Omega_n} f_i\varphi\,d\mx\,,\;\varphi\in W_n\,,
\end{equation}
where $H_i$ is given in (H3) and
\[
W_n :=\{v\in\Hj{\Omega_n}:v|_{\Gamma_0\cup \Gamma_1^n}=0\} \,.
\]
Recall that $\Gamma_i^n=\partial\Omega_n\cap Q_i$, $i=1,2$.
Then it is easy to check that $u_i^n = H_i|_{\Omega_n} + v_i^n \in H_i|_{\Omega_n} + W_n$
is the unique weak solution to  
\begin{equation}\label{eq:gen_u_n}
\left\{ \begin{array}{cc} -\dv(\mA_n \nabla u_i^n)=f_i& \text{ in }\Omega_n\\ 
u_i^n=h_i &\text{ on }\Gamma_0\\
u_i^n=0 &\text{ on }\Gamma_1^n\\
\mA\nabla u_i^n\cdot \mn=0 &\text{ on }\Gamma_2^n ,
\end{array} \right.
\end{equation}
where we used that $H_i=0$ a.e.~on $Q_1$ and $\Gamma_1^n\subseteq Q_1$.

Analogous to the derivation in \eqref{eq:ocj}, we can show that the sequence of 
real numbers $(\|v_i^n\|_{\Hj{\Omega_n}})_n$ is bounded. Indeed, by
utilising Lemma \ref{lem:poincare} (where $\sqrt{c_1}$ represents the constant $c$), 
the coercivity of $\mA$, the weak formulation \eqref{eq:weak_gen}, and assumption
(H3), we obtain:
\begin{equation*}
\begin{aligned}
\alpha c_1 \|v^n_i\|^2_{\Hj{\Omega_n}} &\leq \alpha \|\nabla v^n_i\|^2_{\Ld{\Omega_n}}\\
&\leq \int_{\Omega_n} \mA_n\nabla v^n_i\cdot\nabla v^n_i \,d\mx
	= \int_{\Omega_n}      f_i v^n_i -\mA_n\nabla H_i\cdot\nabla v^n_i\,d\mx\\
&\leq  \|f_i\|_{\Ld{D}}\| v_i^n\|_{\Ld{\Omega_n}} 
	+\beta \|\nabla H_i\|_{\Ld{D}} \|\nabla v_i^n\|_{\Ld{\Omega_n}}\\
&\leq c_2\|v_i^n\|_{\Hj{\Omega_n}}\,,
\end{aligned}
\end{equation*}
where constant $c_2>0$ does not depend on $n$. Thus, 
$\sup_{n\in\N} \|v_i^n\|_{\Hj{\Omega_n}}< \infty$.

We now need to extend $v_i^n$ to the whole domain $D$. Since $v_i^n$ is 
not necessarily zero on the entire boundary $\partial\Omega_n$ (see $W_n$), 
a simple zero extension (as employed in the proof of Theorem $\text{\ref{tm:dir}}$) 
is not possible. Furthermore, while the extension in the sense of Chenais is applicable, 
it does not preserve the homogeneous Dirichlet boundary conditions required
for our framework. Thus, we will use a combination of these two extension techniques.

Since $Q_1$ and $Q_2$ are compact disjoint sets (see (A6)), there exists open, bounded 
and disjoint open neighbourhoods $U_1, U_2\subseteq\R^d$, i.e.~$Q_i\subset U_i$, $i=1,2$. 
Let us take two smooth nonnegative functions $\varphi_1,\varphi_2\in {\rm C}^\infty_c(\R^d)$ 
with the following properties:
\begin{align*}
&\varphi_i=0 \quad \hbox{on} \ U_j \,, \ i\neq j\,,\\
&\varphi_1^2+ \varphi_2^2 =1  \quad  \hbox{on} \ \R^d
\end{align*}
(the necessity of the condition $\varphi_1^2 + \varphi_2^2 = 1$ will become 
apparent later in the proof). 
This can be achieved by mollifying the characteristic functions of the complement of 
$\widetilde U_i$, $i=1,2$, where $\widetilde U_i$ are slightly larger disjoint open sets 
$U_i\ssubset\widetilde U_i$. If we denote these functions by $\eta_i$, $i=1,2$,
then we can take
$\varphi_1=\frac{\eta_2}{\sqrt{\eta_1^2+\eta_2^2}}$ and 
$\varphi_2=\frac{\eta_1}{\sqrt{\eta_1^2+\eta_2^2}}$ (note that the denominator is 
never zero).

Let us define $w_i^n:=v_i^n\varphi_1$. Since $v_i^n$ is zero on $\Gamma_0\cup\Gamma_1^n$,
while $\varphi_1$ is zero on $U_2\supset Q_2$, we have that $w_i^n$ has the zero trace on
the whole boundary $\partial\Omega_n$. Hence, $w_i^n \in\HH_0^1(\Omega_n)$, which allows
us to extend $w_i^n$ by zero to the whole $D$. Let us denote these extensions by a single 
vector-valued function $\widetilde\vw^n=(\widetilde w_1^n,\dots,\widetilde w_m^n)$. 
Since,
$$
\|\widetilde w_i^n\|_{\Hjn{D}} = \|w_i^n\|_{\Hj{\Omega_n}} 
	\leq C_{\varphi_1} \|v_i^n\|_{\Hj{\Omega_n}} \,,
$$
the sequence $(\widetilde\vw^n)$ is bounded in $\HH_0^1(D;\R^m)$.
Thus, we can pass to another subsequence such that $(\widetilde\vw^n)$
converges to $\widetilde\vw\in\HH_0^1(D;\R^m)$ weakly in $\HH_0^1(D;\R^m)$ and almost everywhere 
on $D$.

For a sequence $z_i^n:= v_i^n\varphi_2$ we apply the extension of Chenais \cite{Chenais}, 
denoted by $\widetilde z_i^n$ and $\widetilde\vz^n=(\widetilde z_1^n,\dots,\widetilde z_m^n)$.
Then by \eqref{eq:chenais} we have
$$
\|\widetilde \vz^n\|_{\Hj{D;\R^m}} \leq \sqrt{c_0} \|\vz^n\|_{\Hj{\Omega_n;\R^m}} 
\leq \sqrt{c_0} C_{\varphi_2} \|\vv^n\|_{\Hj{\Omega_n;\R^m}} \,,
$$
implying that $(\widetilde\vz^n)$ is bounded in $\HH^1(D;\R^m)$.
We then take another subsequence such that $\widetilde\vz^n$ converges to 
$\widetilde\vz$ weakly in $\HH^1(D;\R^m)$ and almost everywhere 
on $D$. 

Finally, let us define
$$
\widetilde\vv^n:= \varphi_1\widetilde\vw^n + \varphi_2\widetilde\vz^n \,.
$$
Then $(\widetilde\vv^n)$ converges to $\varphi_1\widetilde\vw + \varphi_2\widetilde\vz =: \widetilde\vv$
weakly in $\Hj{D;\R^m}$ and a.e.~on $D$. 
Let us show that $\widetilde\vv^n$ is an extension of $\vv^n$. For a.e.~$\mx\in\Omega_n$ 
we have
$$
\begin{aligned}
\widetilde\vv^n(\mx) &= \varphi_1(\mx)\widetilde\vw^n(\mx) + \varphi_2(\mx)\widetilde\vz^n(\mx) \\
&= \varphi_1(\mx)\vw^n(\mx) + \varphi_2(\mx)\vz^n(\mx) \\
&= (\varphi_1(\mx)^2+\varphi_2(\mx)^2) \vv^n(\mx) = \vv^n(\mx) \,.
\end{aligned}
$$
Thus, as announced, we have successfully constructed an extension of
$\vv_n$ using a combination of the zero-extension and Chenais's extension. 
Crucially, the construction method is independent of $n$.

This completes Step II. Now we turn to the restriction 
$\vv:=\widetilde\vv|_\Omega$. 
While it is obvious that $\vv\in\Hj{\Omega;\R^m}$, it is left to check 
the boundary conditions, i.e.~that $\vv$ belongs to the suitable
variational space $W^m$, where
\begin{equation*}
W:=\{v \in\Hj{\Omega}:v|_{\Gamma_0\cup \Gamma_1}=0\}
\end{equation*}
and $\Gamma_1=\partial\Omega\cap Q_1$.

Since the condition $\vv^n|_{\Gamma_0} =\vnul$ holds for any $n\in\N$,
it follows from the construction of the extensions that
$\widetilde\vw^n|_{\Gamma_0} = \widetilde\vz^n|_{\Gamma_0}=\vnul$ for
all $n$ (recall that $\varphi_1, \varphi_2$ are smooth).
As the trace is weakly sequentially continuous, we conclude 
that the weak limits also satisfy the boundary condition:
$\widetilde\vw|_{\Gamma_0} = \widetilde\vz|_{\Gamma_0}=\vnul$.
This, in turn, implies that $\vv$ satisfies the 
homogeneous boundary condition on the fixed part of the boundary: 
$\vv|_{\Gamma_0} = \vnul$.

Following the approach from the beginning of Step
III of the proof of Theorem \ref{tm:dir}, we get that 
$\vw:=\widetilde\vw|_\Omega\in \Hjn{\Omega;\R^m}$.
In particular, $(\varphi_1\vw)|_{\Gamma_1}=\vnul$.
On the other hand, we trivially have 
$(\varphi_2\vz)|_{\Gamma_1}=\vnul$ 
because $\varphi_2$ was constructed to be zero on the 
open set $U_1$, which contains $\Gamma_1$ 
(since $\Gamma_1 \subseteq Q_1 \subseteq U_1$).
The latter is the reason why we did not define 
$\widetilde\vv^n$ simply as the sum of $\widetilde\vw^n$ and 
$\widetilde\vz^n$, as then we would not have any information
about the support of $\widetilde\vz^n$ around $\Gamma_1$. 
Therefore, based on the established boundary conditions, 
we indeed have $\vv\in W^m$. 

Furthermore, at the same subsequence we have weak convergence of $\widetilde\vu^n:=\widetilde\vv^n+\vH$ to $\widetilde\vu$, and in the same manner we denote  $\vu:=\widetilde\vu|_\Omega$, belonging to the desired space 
$\vH+W^m$.

It is left to see that $\vv$ is a solution to a suitable 
variational problem, i.e.~we need to pass to the limit 
in \eqref{eq:weak_gen} (or in \eqref{eq:gen_u_n} if we consider 
the equation for $u_i^n$).

Let us take an arbitrary $\varphi\in\Cbc{\R^d}$
which is equal to zero on an open neighbourhood $U$ of
$\Gamma_0\cup\Gamma_1$, i.e.~$\Gamma_0\cup\Gamma_1\subseteq U$.
The set of restriction to $\Omega$ of such functions $\varphi$
is dense in $W$ (given above). 
Indeed, the claim follows by \cite[Theorem 3.1]{Bern}.
This is because the set $\Gamma_0 \cup \Gamma_1$ is relatively 
open in $\partial\Omega$ (since its complement $\Gamma_2$ is closed 
in $\partial\Omega$). Furthermore, the interface between the sets, 
defined by $\overline{\Gamma_0\cup\Gamma_1} \cap \Gamma_2$, 
simplifies to $\overline{\Gamma_0}\cap\Gamma_2$ (since $Q_1$ and $Q_2$
are disjoint). 
This interface, which is a subset of $\partial\Gamma_0$, 
satisfies the required regularity condition: 
it has finitely many connected components and is locally 
a graph of a Lipschitz continuous function, as ensured by assumption (A3).

Since $\Gamma_1^n$ converges to $\Gamma_1$ in the sense of Hausdorff
(for compact sets), for sufficiently large $n$ we have
$\Gamma_1^n\cup\Gamma_0\subseteq U$ (cf.~\cite[Proposition 2.2.17]{HP18} 
for the complementary property for open sets).
Hence, $\varphi|_{\Omega_n}\in W_n$, i.e.~$\varphi$ 
is an admissible test function for the variational problem for $v_i^n$
\eqref{eq:weak_gen}. 

The passage to the limit in \eqref{eq:weak_gen} for the above chosen
$\varphi$ is performed in exactly the same way as in the proof of 
Theorem \ref{tm:neu}.
Thus, we end up with
\begin{equation*}
\int_{\Omega} \mA\nabla u_i\cdot\nabla\varphi\,d\mx=\int_{\Omega}  f_i\varphi\,d\mx\,,\ i=1,\ldots,m\,,
\end{equation*}
valid for any $\varphi\in{\rm C}^\infty(\overline{D})$ 
that vanish on a neighborhood of $\Gamma_0\cup\Gamma_1$, 
which means that the limit $\vu$ solves \eqref{state1}.

The rest of the proof of Theorem \ref{tm:dir}, i.e.~Step IV, 
carries over without modification.
\end{proof}

\section{Hybrid method - algorithm overview}\label{sec:algorithm}

Building on the relaxation results derived in the previous section, we now introduce the associated numerical scheme. 
The method follows the relaxation framework closely: it combines homogenization-based material relaxation in the 
interior with shape optimization driven by shape differentiation of the domain.

To ensure well-posedness of the algorithm, we impose additional regularity assumptions on the cost functional, namely 
its differentiability. Specifically, we assume that the functions  $g_\alpha$ and $g_\beta$ are differentiable on $D\times\R^m$ 
with respect to both the spatial variable $\mx$ and  the state variable $\vu$.
We denote the gradient of $g_\gamma$ (for $\gamma=\alpha,\beta$) with respect to  $\mx$ and $\vu$ by  
$\nabla_\mx g_\gamma=(\frac{\partial g_\gamma}{\partial x_1},\hdots,\frac{\partial g_\gamma}{\partial x_d})^\tau$, 
and $\nabla_\vu g_\gamma=(\frac{\partial g_\gamma}{\partial u_1},\hdots,\frac{\partial g_\gamma}{\partial u_m})^\tau$, respectively.

We assume the following:

\begin{itemize}
	\item[(A7)] % $g_{\gamma},\frac{\partial g_{\gamma}}{\partial x_i},\frac{\partial g_{\gamma}}{\partial u_j}$ are Carathéodory functions on $D\times\R^m$ for ${\gamma=\alpha,\beta,\: i=1,\hdots,d,\: j=1,\hdots,m}$ satisfying the following  growth conditions:
\begin{itemize}
\item[$i.$] For $\gamma\in\{\alpha,\beta\}$ and $i=1,2,\hdots,n$, $\frac{\partial g_{\gamma}}{\partial x_i}:D\times\R^m\to\R$  are Carathéodory functions and satisfy the same growth condition as $g_{\gamma}$ in (A5).
\item[$ii.$] For $\gamma\in\{\alpha,\beta\}$ and $j=1,2,\hdots,m$, $\frac{\partial g_{\gamma}}{\partial u_j}:D\times\R^m\to\R$  are Carathéodory functions and  satisfy the  growth conditions
\begin{equation*} %\label{assump2}
	\left|\frac{\partial g_{\gamma}}{\partial u_j}(\mx,\vu)\right|\leq\widetilde \varphi_{\gamma}(\mx)+\widetilde\psi_{\gamma}(\mx)|\vu |^{q-1},   %\quad \gamma=\alpha,\beta, 
\end{equation*}
for some $q\in[1,q^\ast\rangle$, with nonnegative functions $\widetilde\varphi_{\gamma}\in\pL {s_1}D$ 
   and $\widetilde\psi_{\gamma}\in\pL {s_2}D$, where $s_1>q_*'$ (with $q_*'$ denoting the dual exponent), and  $s_2$ satisfies the same condition as in (A5).
\end{itemize}

\end{itemize}

%We provide here only a brief presentation; for a detailed exposition, the reader is referred to \cite{All02}. 

We begin by fixing a domain $\Omega\in\lO'$ (or $\lO$ if some of the variable boundary conditions are absent, 
as discussed in  Remark \ref{rem:Q1Q2}). For a fixed $\Omega$, we employ the optimality criteria method, 
a well-established approach in structural optimization \cite{R89, B95, All02}. Within the framework of compliance 
minimization in linearized elasticity \cite{AF93}, this method can be rigorously interpreted as a descent algorithm 
of the alternating-directions type for a double minimization problem.  The convergence of the method for energy 
minimization in stationary diffusion problems involving two isotropic materials was proved in \cite{T97}, and for 
the multi-state case in \cite{BC20}.  In the specific case of mixtures of two isotropic phases, the explicit solution 
of the G-closure problem enables the treatment of a  general cost functional with multiple states 
\cite{All02,Vnarwa,BCV18}.

The objective is to determine an optimal distribution of materials such that {($\Omega$ is fixed)} the 
admissible design set $\lA_\Omega:=\{ (\theta,\mA) :  (\Omega,\theta,\mA)\in{\mathcal{A'}}\}$ minimizes the prescribed 
objective functional.

Let $(\theta,\mA)\in \lA_\Omega$, and let $\vu$ denote the solution of \eqref{state1} associated with the 
triplet $(\Omega,\theta,\mA)\in \mathcal{A'}$. Consider a variation $(\delta\theta,\delta\mA)$ of the given design. 
The corresponding first-order variation (more precisely, its G\^{a}teaux derivative)  of the cost functional $\JJ$ 
with respect to $(\theta,\mA)$  is then given by (for details, the reader is referred to \cite{All02})
\[ \JJ'_{(\theta,\mA)}(\Omega,\theta,\mA;\delta\theta,\delta\mA) 
=\int_\Omega \delta\theta [g_\alpha(\cdot,\vu)-g_\beta(\cdot,\vu)] \,\mathrm{d}\mx
-\int_\Omega \sum_{i=1}^m \delta A \nabla u_i\cdot \nabla p_i\,\mathrm{d}\mx. \]
Here, the adjoint state function $\vp=(p_1,\ldots,p_m)\in W^m=\{ \vu\in \HH^1(\Omega;\R^m)  : \vu|_{\Gamma_0\cup\Gamma_1}
=\vnul \}$  are solutions of the following adjoint equation: 
\begin{equation}\label{eq:adjoint}
   (\forall \varphi \in V)\quad \int_\Omega \mA \nabla p_i\cdot\nabla \varphi\,\mathrm{d}\mx  
   =\int_\Omega \left[\theta \frac{\partial g_\alpha}{\partial u_{i}}(\cdot,\vu)
   +(1-\theta)\frac{\partial g_\beta}{\partial u_i}(\cdot,\vu)\right]\varphi\,\mathrm{d}\mx,
\end{equation}
for $i=1,\hdots,m$, with $W=\{\varphi\in\Hj{\Omega}:\varphi|_{\Gamma_0\cup \Gamma_1}=0\}$. 
%Note that $\vu$ and $\vp$ will always represent the solution of \eqref{state1} and \eqref{eq:adjoint}, respectively, for a given triplet $(\Omega,\theta,\mA)\in {\mathcal{A}_\varepsilon}$, unless stated otherwise.
%For simplicity, w

\begin{remark}\label{remarkGrowth}
The right-hand side of \eqref{eq:adjoint} is well defined, that is,
\[  \theta \frac{\partial g_\alpha}{\partial u_{i}}(\cdot,\vu)
   +(1-\theta)\frac{\partial g_\beta}{\partial u_i}(\cdot,\vu)\]
belongs to $\pL{r}{\Omega}$, for some $r>q_*'$, which is continuously embedded in $W'$, the dual space of $W$.
%Indeed, since $\theta\in L^\infty(\Omega;[0,1]),$ it is sufficient to prove that $\frac{\partial g_\gamma}{\partial u_{i}}(\cdot,\vu)$ belongs to $L^{r}(\Omega),$ for some $r>q_*'$. 
Indeed, %by the growth condition (A7), we have
%\[\left|\frac{\partial g_{\gamma}}{\partial u_j}(\mx,\vu(\mx))\right|\leq\widetilde \varphi_{\gamma}(\mx)+\widetilde\psi_{\gamma}(\mx)|\vu(\mx) |^{q-1}\quad \text{a.e.}\:\: \mx\in \Omega\]
%where 
we have $\vu\in W^m \hookrightarrow  \pL{q_\ast} {\Omega;\R^m} $ 
(or $\vu\in\pL{p} {\Omega;\R^m}$ for any $p\in \left[1,+\infty\right>$ if $d=2$). 

In the case $d=2$, we have
$|\vu|^{q-1}\in\pL{\frac p{q-1}}{\Omega}$ for any $p\in \left[1,+\infty\right>$. Consequently,
$\widetilde\psi_{\gamma}|\vu|^{q-1}\in\pL{r}{\Omega}$ whenever 
$\frac{1}{r}=\frac{1}{s_2}+\frac{q-1}{p}$.
Since $p$ can be chosen arbitrarily large, the right-hand side converges to $\frac{1}{s_2}$ as $p\to+\infty$. In particular, we can ensure  $r>q_\ast'=1$.

For  $d>2$, Hölder's inequality yields $\widetilde\psi_{\gamma}|\vu|^{q-1}\in\pL{r}{\Omega}$  provided that %, for some $r>q_*'$
$ \frac{1}{r}=\frac{1}{s_2}+\frac{q-1}{q_\ast}$.
Under assumption (A7), this exponent satisfies
\[ \frac{1}{r}=\frac{1}{s_2}+\frac{q-1}{q_\ast}<\frac{q_\ast-1}{q_\ast}=\frac{1}{q_*'} ,\]
and hence $r>q_*'$.

%, and the same conclusion holds.
%Hence, $p$ must satisfy
%\[ p>\frac{q-1}{1-\frac{1}{q_\ast}-\frac{1}{s_2}} \]
%Since we also have $p<q_\ast$, it remains to verify that
%\[\frac{q-1}{1-\frac{1}{q_\ast}-\frac{1}{s_2}}< q_\ast \]
%is well defined. This condition is equivalent to the assumption on $s_2$ in (A5). Therefore, there exists $p>1$ such that the expression on the
%right-hand side belongs to $\pL{r}{\Omega},$ for some $r>q_*'$. 
%The growth conditions on the partial derivatives with respect to $\mx$ are required only for Theorem~\ref{thm:hybridShape}, 
%and not for the homogenization method. In particular, they ensure that 
%$\frac{\partial g_\gamma}{\partial u_{i}}(\cdot,\vu)$ belongs to $\pL{1}{\Omega}$  (see Step~IV in the proof of Theorem~\ref{tm:dir}).
\end{remark}

We define the matrix function 
\begin{equation}\label{eq:M}
\mM:= \text{Sym} \sum_{i=1}^m \nabla u_i \otimes \nabla p_i
\end{equation}
where $\text{Sym}$ denotes the symmetric part of the matrix. We also define the mapping:
\begin{equation}\label{eq:mapF}
     F(\theta,\mM) := \max_{\mA \in \mathcal{K}(\theta)} \mA:\mM. 
\end{equation}

The constraint on the quantities of the original phases is incorporated in the standard way through the associated Lagrangian functional:
\begin{equation}
   \lL(\Omega,\theta,\mA)={\JJ}(\Omega,\theta,\mA) +l\int_{\Omega}\theta\,\mathrm{d}\mx\\
\end{equation}
where $l$ is the Lagrangian multiplier. 
The following necessary conditions of optimality are satisfied  (see \cite[Theorem 3.2.14]{All02}):
\begin{theorem}
    For an optimal $(\theta^\ast,\mA^\ast)\in \lA_\Omega$ and the respective solutions $\vu^\ast$ and $\vp^\ast$ of \eqref{state1} and \eqref{eq:adjoint} corresponding to the triplet $(\Omega,\theta^\ast,\mA^\ast)$, define
    \[ q(\mx) := l+g_\alpha(\mx,\vu^\ast(\mx))-g_\beta(\mx,\vu^\ast(\mx))-\frac{\partial F}{\partial \theta} (\theta^\ast(\mx),\mM^\ast(\mx)), \]
    with $\mM^\ast$ defined by \eqref{eq:M} with $u_i^*$ and $p_i^*$ on the right-hand side.
    The optimal density $\theta^\ast$ satisfies the following conditions, almost everywhere on $\Omega$ 
    \[\begin{array}{cl}
         q(\mx)>0 \quad&\implies\quad \theta^\ast(\mx)=0,  \\
         q(\mx)<0 \quad&\implies\quad \theta^\ast(\mx)=1,  \\
         q(\mx)=0 \quad&\implies\quad \theta^\ast(\mx)\in[0,1].  \\
    \end{array}\]
    Furthermore, $\mA^\ast$ is a maximizer in the definition of $F(\theta^\ast,\mM^\ast)$.
\end{theorem}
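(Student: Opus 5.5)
The plan is to derive the conditions as first-order optimality conditions for the fixed-domain problem on $\lA_\Omega$, following \cite[Theorem 3.2.14]{All02}. The key observation is that the functional depends on $\mA$ only through the state $\vu$, and that the set $\{(\theta,\mA):\mA\in\Kth\}$ is convex, since the G-closure set of a two-phase mixture, viewed in the product space $(\theta,\mA)$, is convex (this is Lemma 3.2.? in \cite{All02}). Because $\lA_\Omega$ is convex, for any admissible $(\theta,\mA)$ the direction $(\delta\theta,\delta\mA)=(\theta-\theta^\ast,\mA-\mA^\ast)$ is admissible. Optimality then gives
\[
\JJ'_{(\theta,\mA)}(\Omega,\theta^\ast,\mA^\ast;\theta-\theta^\ast,\mA-\mA^\ast)+l\int_\Omega(\theta-\theta^\ast)\,\mathrm{d}\mx\geq 0 .
\]
The volume constraint is handled by the Lagrange multiplier $l$. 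This is standard because the constraint is a single affine equality on a convex set, and a suitable $l$ exists by a separation argument. The Gâteaux derivative formula stated above, with adjoint $\vp^\ast$ from \eqref{eq:adjoint}, is justified by the growth conditions in (A7) together with Remark \ref{remarkGrowth}.

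Next I would localize. Substituting the derivative formula gives
\[
\int_\Omega (\theta-\theta^\ast)\bigl(l+g_\alpha-g_\beta\bigr)-(\mA-\mA^\ast):\mM^\ast\,\mathrm{d}\mx\geq 0
\]
for all admissible $(\theta,\mA)$. First I would fix $\theta=\theta^\ast$ and vary $\mA\in\lK(\theta^\ast)$ pointwise using measurable selections. This yields $\mA^\ast:\mM^\ast\ge\mA:\mM^\ast$ a.e., so $\mA^\ast$ attains $F(\theta^\ast,\mM^\ast)$.

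Then I would vary $\theta$. For each $\theta$, I would pick $\mA(\mx)$ to be a measurable maximizer in $F(\theta(\mx),\mM^\ast(\mx))$, so that the inequality becomes
\[
\int_\Omega (\theta-\theta^\ast)(l+g_\alpha-g_\beta)-\bigl[F(\theta,\mM^\ast)-F(\theta^\ast,\mM^\ast)\bigr]\ge0 .
\]
Since $\lK(\theta)$ is explicit in \eqref{eq:Gclosure}, $F$ is concave in $\theta$ and differentiable (for $\theta\in(0,1)$, with one-sided derivatives at the endpoints). So I would take $\theta=\theta^\ast+t\,\xi$ with $\xi$ supported where admissible, divide by $t$, and let $t\to0$. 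A pointwise Lebesgue-point argument then shows that $q$ has the sign conditions stated: $q>0$ forces $\theta^\ast=0$, $q<0$ forces $\theta^\ast=1$, and $q=0$ is permitted in the interior.

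The main obstacle is justifying $\partial F/\partial\theta$ and the passage to the limit. Concretely, I need differentiability of $\theta\mapsto F(\theta,\mM)$, a dominated bound on the difference quotients (roughly $|\partial_\theta F|\lesssim|\mM|\in\Lj\Omega$), and measurability of the maximizers. I would first try to compute $F$ explicitly via the eigenvalues of $\mM$, as is done in \cite{All02}. This gives concavity and a bound of the form $|F(\theta_1,\mM)-F(\theta_2,\mM)|\le C|\theta_1-\theta_2||\mM|$, which suffices for dominated convergence.
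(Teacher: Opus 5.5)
The paper does not prove this statement itself; it only cites \cite[Theorem 3.2.14]{All02}. So the question is whether your argument stands on its own, and it does not: the step that produces the $\theta$-condition rests on a false claim. The set $\{(\theta,\mA):\mA\in\Kth\}$ is \emph{not} convex. The pure phases $(0,\beta\mI)$ and $(1,\alpha\mI)$ belong to it, but the midpoint $(\tfrac12,\tfrac{\alpha+\beta}{2}\mI)$ does not. Indeed, taking all $\lambda_j=\lambda^+_\theta$ in the last inequality of \eqref{eq:Gclosure} forces $\lambda^+_\theta\le\lambda^-_\theta$, which fails for $\theta\in(0,1)$: the arithmetic mean is attained in one direction only, by a laminate.

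For the same reason $F(\cdot,\mM)$ is not concave in $\theta$. Since $F(\theta,\cdot)$ is the support function of the convex set $\lK(\theta)$, concavity in $\theta$ for all $\mM$ is equivalent to convexity of the graph. Concretely, $F(\theta,\mI)$ is $d$ times the upper Hashin--Shtrikman bound, which is strictly convex in $\theta$. Concavity is not actually needed (differentiability and $|\partial_\theta F|\le C|\mM|$ suffice), but it is false.

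Consequently the segment $t\mapsto(\theta^\ast+t(\theta-\theta^\ast),\mA^\ast+t(\mA-\mA^\ast))$ is in general not admissible. Neither your global variational inequality nor the inequality involving $F(\theta,\mM^\ast)-F(\theta^\ast,\mM^\ast)$ is justified. The latter would even assert that $\theta^\ast$ globally minimizes $\int_\Omega\theta(l+g_\alpha-g_\beta)-F(\theta,\mM^\ast)\,\mathrm{d}\mx$, which is far more than a first-order condition can give.

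What survives is the $\mA$-step. For fixed $\theta^\ast$ the fibre $\lK(\theta^\ast(\mx))$ is convex, because the constraints in \eqref{eq:Gclosure} are convex in $\mA$ once $\theta$ is fixed. The volume constraint does not involve $\mA$, so segments in $\mA$ alone are admissible, and the maximizer property of $\mA^\ast$ follows as you say.

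For the $\theta$-condition you must instead use genuinely admissible curves $t\mapsto(\theta^\ast+t\xi,\mA_t)$ with $\mA_t(\mx)\in\lK(\theta^\ast(\mx)+t\xi(\mx))$. You need two properties:
(i) $\|\mA_t-\mA^\ast\|_{\Lb\Omega}=O(t)$, so that the Fr\'echet expansion of $\JJ$ has an $o(t)$ remainder;
(ii) $(\mA_t-\mA^\ast):\mM^\ast\ge t\,\xi\,\partial_\theta F(\theta^\ast,\mM^\ast)-o(t)$, with an integrable majorant.
In other words, you need a measurable selection of (near-)maximizers of $\mA:\mM^\ast$ over $\lK(\theta)$ that passes through the given $\mA^\ast$ and is Lipschitz in $\theta$. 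The maximizer need not be unique (e.g.\ on $\{\mM^\ast=0\}$, or for rank-one $\mM^\ast$ when $d=3$). So this requires the explicit structure of the maximizers, namely laminates aligned with the eigenvectors of $\mM^\ast$, or a parametrization of $\lK(\theta)$ over a convex parameter set. This is exactly the ingredient your convexity claim bypasses. Once (i)--(ii) are in place, the rest goes through as you describe: dividing by $t$, the multiplier argument on the convex cone of admissible directions $\xi=\tilde\theta-\theta^\ast$, and the Lebesgue-point localization.
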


%We now focus on the case $d=2$. For readability, we restate a result from \cite{All02} Lemma 3.2.17 
%\begin{prop}
%    Let $\mu_1,\mu_2$ be eigenvalues of $M$ such that $\mu_1\leq \mu_2$. Then function defined by \eqref{eq:mapF} is equal to 
%   \[ f(\theta,M)=\left\{\begin{array}{cc}
%        \beta(\mu_1+\mu_2)-\frac{\beta(\beta-\alpha)\theta}{\alpha+\beta +(\beta-\alpha)\theta}(\sqrt{\mu_1}+\sqrt{\mu_2})^2 & \text{ if }\: \:  0<\mu_1,\:\:\theta<\theta_B  \\
%        \alpha(\mu_1+\mu_2)-\frac{\alpha(\beta-\alpha)(1-\theta)}{2\alpha+(\beta-\alpha)\theta}(\sqrt{-\mu_1}+\sqrt{-\mu_2})^2  &  \text{ if }\: \:  \mu_2<0,\:\:\theta>\theta_A  \\
%        \mu_1\frac{\alpha\beta}{\theta(\beta-\alpha)+\alpha}+\mu_2(\beta+\theta(\beta-\alpha)) & \text{ otherwise},
%   \end{array}\right. \]
%   where $\theta_A=\frac{\alpha}{\beta-\alpha}\left( \frac{\sqrt{-\mu_1}}{\sqrt{-\mu_2}}-1\right)$ and $\theta_B=\frac{\beta}{\beta-\alpha}\left( \frac{\sqrt{\mu_1}}{\sqrt{\mu_2}}-\frac\alpha\beta\right)$.
%\end{prop}

The function $F$ admits an explicit representation in both two- and three-dimensional cases \cite{All02, Vnarwa}. For any symmetric matrix $\mM$ the mapping $\theta\mapsto\frac{\partial F}{\partial \theta}(\theta,\mM)$ is a monotone and piecewise smooth. Moreover, for fixed matrix $\mM$ and a real number $c$, the equation $\frac{\partial F}{\partial \theta}(\theta,\mM)=c$ with respect to  $\theta$ 
reduces to a quadratic equation and can therefore be solved explicitly.

The second component of the hybrid approach consists in modifying the boundary of the domain. We adopt an approach based on the shape derivative of the objective functional. This is formulated via a perturbation of the identity, $\MAP = \ID + \psi$, where $\psi \in W^{1,\infty}(\R^d,\R^d)$ is sufficiently small in norm to ensure that $\MAP$ is a homeomorphism. A shape functional $\lL$, defined on a family of open subsets compactly contained in $D$ and satisfying the $\varepsilon$-cone property, is said to be shape differentiable if the mapping
\[\psi\mapsto\mathcal{L}(\MAP(\Omega)) \] 
is Fréchet differentiable at zero. The Fréchet derivative at zero in the direction $\psi$ is denoted by $\mathcal{L}'(\Omega;\psi)$ and is referred to as the shape derivative of $\mathcal{L}$ at $\Omega$ in the direction $\psi$.

Given a triplet $(\Omega,\theta,\mA)\in \mathcal{A'}$, we consider its perturbation
$(\MAP(\Omega_n),\theta\circ\MAP^{-1},\mA\circ\MAP^{-1})$ 
and denote by $\vu(\psi) := \big(u_1(\psi), \ldots, u_m(\psi)\big) \in 
\HH^1(\MAP(\Omega);\R^m)$ the associated perturbed state, that is, the vector of temperature 
fields. 
{ We additionally assume that $\psi=0$ on $\Gamma_0$ in order to keep this  portion of the boundary fixed.}
For $i=1,\ldots,m$, the function $u_i(\psi)$ is defined as the solutions of the following 
boundary value problem 
\begin{equation}\label{PerState}%\tag{PerS}
 \left\{ \begin{array}{rl} -\dv(\mA\circ \MAP^{-1} \nabla u_i(\psi))=f_i& \text{ in }\MAP(\Omega)\\ 
  u_i(\psi)=h_i &\text{ on }\MAP(\Gamma_0)=\Gamma_0\\
  u_i(\psi)=0 &\text{ on }\MAP(\Gamma_1)\\
 \mA\circ \MAP^{-1}\nabla u_i(\psi)\cdot \mn=0 &\text{ on }\MAP(\Gamma_2).
 \end{array} \right.
\end{equation}
%For notational convenience, we set $\vu(\psi) := \vu(\cdot;\psi)$ and $u_i(\psi) := u_i(\cdot;\psi)$. 
In particular, for $\psi=0$ the corresponding state $\vu = \vu(0)$ coincides with the solution of 
\eqref{state1}, hence,  we omit the explicit dependence on zero. For the results below, we assume
$f_i\in \HH^1(D)$ and $h_i$ is the trace to $\Gamma_0$ of an $\HH^1$ function vanishing on $\Gamma_1$.
We further introduce the following shape functional:
\begin{equation}\label{eq:shapeFunctional}\begin{split} \mathcal{L}(\MAP(\Omega))
    &= {\JJ}(\MAP(\Omega_n),\theta\circ\MAP^{-1},\mA\circ\MAP^{-1}) 
    +l\int_{\MAP(\Omega)}\theta\circ\MAP^{-1} \,\mathrm{d}\mx\\
&=\int_{\MAP(\Omega)} \theta\circ\MAP^{-1} g_\alpha (\cdot,\vu(\psi))
+(1-\theta\circ\MAP^{-1}) g_\beta (\cdot,\vu(\psi))\,\mathrm{d}\mx\\
&\phantom{=}+l\int_{\MAP(\Omega)}\theta\circ\MAP^{-1} \,\mathrm{d}\mx.
\end{split}\end{equation}

\begin{prop}\label{prop:material}
Let $(\Omega,\theta,\mA)\in{\mathcal{A}}'$. For  $\psi\in \WW^{1,\infty}(\R^d;\R^d)$ such that $\psi=0$ on $\Gamma_0$, the mapping 
\[ \psi \mapsto \vu(\psi)\circ\MAP: \WW^{1,\infty}(\R^d;\R^d)\to \HH^{1}(\Omega;\R^m)\] 
is Fréchet differentiable at zero. Its Fréchet derivative at zero in direction $\psi$ is denoted by $\dot \vu (\psi)=(\dot u_1(\psi),\ldots, \dot u_m(\psi))\in W^m$.
%=\{ \vu\in \HH^{1}(\Omega;\R^m) : \vu|_{\Gamma_0}=0, \vu|_{\Gamma_1}=0 \}$. 
For each $i=1,\ldots,m$, the variational formulation satisfied by $\dot u_i(\psi)$ is given by
\begin{align*}
    \int_\Omega \mA \nabla \dot u_i(\psi) \cdot \nabla \varphi\, \mathrm{d}\mx 
    &=   \int_\Omega (\mA\nabla\psi^\tau+\nabla\psi\mA-\ddiv(\psi)\mA)\nabla u_i \cdot \nabla \varphi\,\mathrm{d}\mx\\
    &\phantom{=}+\int_\Omega \varphi\nabla f_i \cdot\psi+f_i\varphi\ddiv\psi\,\mathrm{d}\mx
\end{align*}
for any $\varphi  \in V$, where $u=(u_1,\ldots,u_m)$ denotes the solution of \eqref{state1}.
\begin{proof}
The result follows from Theorem~3.1 in \cite{KunVrd22}. The fact that $\mA$ is no longer piecewise constant scalar matrix does not alter the overall argument, except that $\mA$ and $\nabla \psi$ no longer commute, which results in a modified expression. Furthermore, due to the prescribed boundary conditions, the implicit function theorem is applied in the space 
$W \subset H^1(\Omega)$ rather than in $H_0^1(\Omega)$. 
\end{proof}
\end{prop}

\begin{theorem}\label{thm:hybridShape}
Let  $\psi \in W^{1,\infty}(\R^d;\R^d)$ be such that $\psi=0$ on $\Gamma_0$. Assume that $g_\alpha$ and $g_\beta$ satisfy {\rm (A5)} and {\rm (A7)}. Then the shape functional defined in \eqref{eq:shapeFunctional} is shape differentiable. Moreover, for a given triplet $(\Omega,\theta,\mA) \in \mathcal{A}'$, its shape derivative is given by
%\begin{equation}
%    \begin{split}
%        \mathcal{L}'(\Omega;\psi)&= \sum_{i=1}^m \int_\Omega  (\mA\nabla\psi^\tau+\nabla\psi\mA-\ddiv(\psi)\mA)\nabla u_i\cdot \nabla p_i +\ddiv(fp_i)u_i \,\mathrm{d}\mx\\
%        &\phantom{=}+\int_\Omega \theta \nabla_\mx g_\alpha(\cdot,\vu(\cdot))\cdot\psi + (1-\theta)\nabla_\mx g_\beta(\cdot,\vu(\cdot))\cdot\psi\,\mathrm{d}\mx\\
%        &\phantom{=}+\int_\Omega [\theta g_\alpha(\cdot,\vu(\cdot))+(1-\theta)g_\beta(\cdot,\vu(\cdot))+l\theta]\ddiv \psi\,\mathrm{d}\mx
%    \end{split}
%\end{equation}
\begin{equation}
    \begin{split}
        \mathcal{L}'(\Omega;\psi)&= \int_\Omega  (\mA\nabla\psi^\tau+\nabla\psi\mA-\ddiv(\psi)\mA):\mM  \,\mathrm{d}\mx\\%+\vp\cdot(\nabla\vf\psi)+\vp\cdot\vf\ddiv(\psi) \,\mathrm{d}\mx\\
        &\phantom{=}+\int_\Omega \theta \nabla_\mx g_\alpha(\cdot,\vu)\cdot\psi + (1-\theta)\nabla_\mx g_\beta(\cdot,\vu)\cdot\psi\,\mathrm{d}\mx\\
        &\phantom{=}+\int_\Omega [\theta g_\alpha(\cdot,\vu)+(1-\theta)g_\beta(\cdot,\vu)+l\theta]\ddiv \psi\,\mathrm{d}\mx \\
        &\phantom{=}+\sum_{i=1}^m \int_\Omega p_i\nabla f_i \cdot\psi +f_ip_i\ddiv\psi\,\mathrm{d}\mx,
    \end{split}
\end{equation}
where $\vu=(u_1,\hdots,u_m)$ is the solution of \eqref{state1}, $\mM$  is given by \eqref{eq:M}, and $\vp=(p_1,\hdots,p_m)\in W^m$ is  the  solution of \eqref{eq:adjoint}.
\end{theorem}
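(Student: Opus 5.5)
We pull the perturbed functional back to the fixed reference domain $\Omega$ by the change of variables $\mx=\MAP(\my)$, and then differentiate at $\psi=0$ using Proposition \ref{prop:material}. Since $\theta\circ\MAP^{-1}\circ\MAP=\theta$ and $\det\nabla\MAP=1+\ddiv\psi+o(\|\psi\|_{\WW^{1,\infty}})$ in $\Lb{\R^d}$, we get
\[
\mathcal{L}(\MAP(\Omega))=\int_\Omega \Bigl[\theta\, g_\alpha\bigl(\MAP,\vu(\psi)\circ\MAP\bigr)+(1-\theta)\,g_\beta\bigl(\MAP,\vu(\psi)\circ\MAP\bigr)+l\theta\Bigr]\det\nabla\MAP\,\mathrm{d}\my .
\]
The problem therefore reduces to differentiating, at $\psi=0$, the composite map $\psi\mapsto(\MAP,\vu(\psi)\circ\MAP,\det\nabla\MAP)$ followed by a Nemytskii-type integral functional.

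The first step is to show that the superposition operators $(\my,\vu)\mapsto g_\gamma(\my+\psi(\my),\vu)$ are Fréchet differentiable as maps $\WW^{1,\infty}\times \HH^1(\Omega;\R^m)\to\pL1\Omega$. The Carathéodory and growth assumptions (A5) and (A7) give this by the standard argument: write the difference as an integral of the partial derivatives along segments, then apply Hölder's inequality with the Sobolev embedding $\HH^1\hookrightarrow\pL{r}{}$, $r<q^\ast$, together with the integrability computation of Remark \ref{remarkGrowth} and Step IV of the proof of Theorem \ref{tm:dir}.

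The chain rule then gives
\[
\mathcal{L}'(\Omega;\psi)=\int_\Omega\bigl[\theta\nabla_\mx g_\alpha+(1-\theta)\nabla_\mx g_\beta\bigr]\cdot\psi+\bigl[\theta g_\alpha+(1-\theta)g_\beta+l\theta\bigr]\ddiv\psi+\sum_i\bigl[\theta\partial_{u_i}g_\alpha+(1-\theta)\partial_{u_i}g_\beta\bigr]\dot u_i(\psi)\,\mathrm{d}\mx .
\]
The first two groups of terms are already the second and third lines of the claimed formula.

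The remaining task is to eliminate the material derivative $\dot u_i(\psi)$ through the adjoint state. Since $\dot u_i(\psi)\in W$, we may use it as the test function in \eqref{eq:adjoint}. This turns the last sum into $\sum_i\int_\Omega\mA\nabla p_i\cdot\nabla\dot u_i(\psi)$. By the symmetry of $\mA$ this equals $\sum_i\int_\Omega\mA\nabla\dot u_i(\psi)\cdot\nabla p_i$. We then use $p_i\in W$ as the test function in the variational equation of Proposition \ref{prop:material}, which yields
\[
\sum_i\int_\Omega(\mA\nabla\psi^\tau+\nabla\psi\mA-\ddiv(\psi)\mA)\nabla u_i\cdot\nabla p_i+p_i\nabla f_i\cdot\psi+f_ip_i\ddiv\psi\,\mathrm{d}\mx .
\]
The matrix $\mA\nabla\psi^\tau+\nabla\psi\mA-\ddiv(\psi)\mA$ is symmetric. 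Hence $B\nabla u_i\cdot\nabla p_i=B:(\nabla p_i\otimes\nabla u_i)$ may be replaced by $B:\mathrm{Sym}(\nabla u_i\otimes\nabla p_i)$, and summing over $i$ gives the first line with $\mM$ from \eqref{eq:M}, together with the last line of the claimed formula.

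The main obstacle is the rigorous differentiability of the Nemytskii part. We need the remainder to be $o(\|\psi\|_{\WW^{1,\infty}})$ in $\pL1\Omega$, uniformly in the spatial shift $\my\mapsto\my+\psi(\my)$, while $g_\gamma$ is only Carathéodory in $\mx$. We would handle this with the mean value formula $g_\gamma(\my+\psi,\vw)-g_\gamma(\my,\vu)=\int_0^1[\nabla_\mx g_\gamma\cdot\psi+\nabla_\vu g_\gamma\cdot(\vw-\vu)](\my+t\psi,\vu+t(\vw-\vu))\,dt$. We would then use continuity of translations in $\pL{s}{}$ for the dominating functions $\varphi_\gamma,\psi_\gamma$ (from (A5)/(A7)) to show that the integrand converges to its value at $t=0$. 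A secondary technical point is whether $f_i\in\HH^1(D)$ and the extension of $h_i$ are compatible with the pullback. This is exactly what is required for the $\nabla f_i$ terms in Proposition \ref{prop:material}, so no new difficulty arises there.
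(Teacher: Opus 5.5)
Your proposal is correct and follows essentially the same route as the paper: you pull back to $\Omega$ via $\MAP$, expand $g_\gamma(\cdot,\vu(\psi))\circ\MAP$ in $\mathrm{L}^1(\Omega)$ using (A5), (A7) and Proposition~\ref{prop:material}, eliminate $\dot u_i(\psi)$ by testing \eqref{eq:adjoint} with $\dot u_i(\psi)$ and the material-derivative equation with $p_i$, and then symmetrize to obtain $\mM$. The paper only asserts the $\mathrm{L}^1$ Fréchet expansion of the Nemytskii part, so your extra discussion goes beyond it; if you write it out, note that the growth bounds on the dominating functions give only uniform integrability, while convergence in measure of $\nabla_\mx g_\gamma$ and $\nabla_\vu g_\gamma$ at the shifted points (they are merely measurable in $\mx$) comes from a Scorza--Dragoni (Lusin-type) argument, not from continuity of translations, after which Vitali's theorem concludes.
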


\begin{proof}
The shape functional defined in \eqref{eq:shapeFunctional} should be transformed to the original domain $\Omega$, i.e.
\begin{align*}
    \mathcal{L}(\MAP(\Omega))&=\int_{\MAP(\Omega)} \theta\circ\MAP^{-1} g_\alpha (\cdot,\vu(\psi))+(1-\theta\circ\MAP^{-1})g_\beta (\cdot,\vu(\psi))\,\mathrm{d}\mx\\
&\phantom{=}+l\int_{\MAP(\Omega)}\theta\circ\MAP^{-1} (\mx)\,\mathrm{d}\mx.\\
&=\int_\Omega \theta(\mx) g_\alpha (\MAP(\mx),\vu(\psi)\circ \MAP(\mx)) \det \nabla \MAP(\mx)\,\mathrm{d}\mx\\
&\phantom{=}+\int_\Omega(1-\theta(\mx)) g_\beta (\MAP(\mx),\vu(\psi)\circ \MAP(\mx))\det \nabla \MAP(\mx)\,\mathrm{d}\mx\\
&\phantom{=}+l\int_{\Omega} \theta(\mx)  \det \nabla \MAP(\mx)\,\mathrm{d}\mx \,.\\
\end{align*}
Owing to assumption (A7),  we have $\nabla_\mx g_\gamma(\cdot,\vu)\in\pL{1}{\Omega;\R^d}$ (by the same arguments as in  Step~IV of the proof of Theorem~\ref{tm:dir}). 
Moreover, for some $r>q_*'$ (see Remark~\ref{remarkGrowth}), we have $\nabla_\vu g_i(\cdot,\vu)\in\pL{r}{\Omega;\R^m}$. Since
 $\dot \vu(\psi) \in {\rm H}^1(\Omega;\R^m)$, it follows that the mapping $\psi \mapsto g_i(\cdot,\vu(\psi))\circ \MAP$ is Fréchet differentiable at zero for $\gamma=\alpha,\beta$. More precisely,
\begin{align*}
g_i(\cdot,\vu(\psi))\circ \MAP-g_i(\cdot,\vu)=\nabla_\mx g_i(\cdot,\vu)\cdot\psi + \nabla_\vu g_i(\cdot,\vu)\cdot \dot \vu(\psi)+o(\psi) \quad \text{ in }{\rm L}^1(\Omega) \,.
\end{align*}
From this it follows that
\begin{align*}
    \mathcal{L}(\MAP(\Omega))-\mathcal{L}(\Omega)&=\mathcal{L}'(\Omega;\psi) +o(\psi) \,,
\end{align*}
where
\begin{align*}
   \mathcal{L}'(\Omega;\psi)&=\int_\Omega \left[\theta \nabla_\vu g_\alpha(\cdot,\vu)+(1-\theta) \nabla_\vu g_\beta(\cdot,\vu)\right]\cdot\dot \vu(\psi)\,\mathrm{d}\mx\\
    &\phantom{=}+\int_\Omega \left[\theta\nabla_\mx g_\alpha(\cdot,\vu)+(1-\theta)\nabla_\mx g_\beta(\cdot,\vu)\right]\cdot\psi \,\mathrm{d}\mx\\
    &\phantom{=}+\int_\Omega \bigl(\theta g_\alpha(\cdot,\vu)+(1-\theta)g_\beta(\cdot,\vu)+l\theta\bigr)\ddiv \psi \,\mathrm{d}\mx+o(\psi) \,.
\end{align*}
Note that the first term in $\mathcal{L}'(\Omega;\psi)$ can be rewritten as
\begin{align*}
   \sum_{i=1}^m\int_\Omega \left[\theta \frac{\partial g_\alpha}{\partial u_{i}}(\cdot,\vu)+(1-\theta)\frac{\partial g_\beta}{\partial u_i}(\cdot,\vu)\right]\dot u_i(\psi)\,\mathrm{d}\mx
  &= \sum_{i=1}^m\int_\Omega \mA \nabla p_i \cdot \nabla\dot u_i(\psi)\,\mathrm{d}\mx\\ 
  &\hspace{-3cm}=\sum_{i=1}^m \int_\Omega\mA \nabla \dot u_i(\psi) \cdot \nabla p_i\,\mathrm{d}\mx\\
  &\hspace{-3cm}=\sum_{i=1}^m\int_\Omega (\mA \nabla\psi^\tau +\nabla\psi\mA -\ddiv\psi\mA)\nabla u_i\cdot \nabla p_i\,\mathrm{d}\mx\\
  &\hspace{-3cm}\phantom{=}+\sum_{i=1}^m \int_\Omega p_i\nabla f_i \cdot\psi+f_ip_i\ddiv\psi\,\mathrm{d}\mx\\
  &\hspace{-3cm}=\int_{\Omega} (\mA \nabla\psi^\tau +\nabla\psi\mA -\ddiv\psi\mA) : \mM\,\mathrm{d}\mx \\
  &\hspace{-3cm}\phantom{=}+\sum_{i=1}^m \int_\Omega p_i\nabla f_i \cdot\psi +f_ip_i\ddiv\psi\,\mathrm{d}\mx,
\end{align*}
thus completing the proof.
\end{proof}
%\qed
%\begin{remark}
%Note that the previous results can be extended to the more general case where  $\Omega\ssubset D$ is a Lipschitz domain and the boundary portions $\Gamma_i$ may satisfy $\overline\Gamma_i\cap\overline\Gamma_j\neq \emptyset$.  It is sufficient that each $\Gamma_i$ is relatively open in  $\partial\Omega$, has positive $(d-1)$-dimensional Hausdorff measure,  and consists of finitely many connected components.
% The key observation is that a perturbation generated by  $\psi$, provided it is sufficiently small in the $W^{1,\infty}(\mathbb{R}^d;\mathbb{R}^d)$-norm, does not change the topology of the boundary. 
% In particular, for $\psi$ in a neighborhood of zero, the mapping
%\[
%\psi \longmapsto u(\psi)\circ \Phi_\psi
%\]
%is well defined. This holds not only with respect to geometric 
%variations of $\partial\Omega$, but also with respect to the induced 
%positions of the boundary conditions. 
%Theorem~\ref{thm:hybridShape} holds in the general case, without the assumption that $\Gamma_0$ is fixed. It suffices to extend the assumptions to a larger set $D$ in which $\Omega$ is compactly embedded.
%\end{remark}

The algorithm can be formulated by combining the optimality criteria method for the interior design with the shape derivative of the functional with respect to the overall domain $\Omega$. The set $\Omega$ is represented by a level set function, which enables a straightforward control of the domain volume through suitable modifications of the level set constant.

\begin{algorithmm}\label{alg} \rm
Take initial domain $\Omega^0$. Repeat for $k=0,\hdots,k_S:$
\begin{itemize}
    \item Initalize $\theta^{k,0}=1, \mA^{k,0}=\alpha\I$. Repeat for $i=1,\hdots,k_H$:
    \begin{enumerate}
        %\item 
        \item Calculate solution $\vu^{k,i}$ of \eqref{state1} and $\vp^{k,i}$ of \eqref{eq:adjoint} for the triplet $(\Omega^k,\theta^{k,i-1},\mA^{k,i-1})\in{\mathcal{A}}$
        \item Define $\mM^{k,i}=\text{Sym} \sum_{j=1}^m \nabla u^{k,i}_j \otimes \nabla p^{k,i}_j$
        \item For each $\mx\in\Omega^k$ let $\theta^{k,i}(\mx)$ be  the zero of map 
        \[
        \theta\mapsto l+g_\alpha(\mx,\vu^{k,i}(\mx))-g_\beta(\mx,\vu^{k,i}(\mx))
        -\frac{\partial F}{\partial \theta} (\theta,\mM^{k,i}(\mx)),
        \]
        in segment $[0,1]$. 
        If such a zero does not exist, set
        $\theta^{k,i}(\mx)=0$ if the function is  positive or  $\theta^{k,i}(\mx)=1$  if it is negative.  The Lagrange multiplier $l$ is updated within the above mapping in order to enforce the volume constraint
         \[
         \int_{\Omega^k} \theta^{k,i}(\mx)\,\mathrm{d}\mx = q_\alpha .
         \]
        \item For $\mx\in\Omega^k$ we define $\mA^{k,i}(\mx)\in \mathcal{K}(\theta^{k,i}(\mx))$ as a  maximizer in \eqref{eq:mapF}.
    \end{enumerate}
    \item Set $\theta^k=\theta^{k,k_H}, \mA^{k}=\mA^{k,k_H}.$ Calculate solution $\vu^{k}$ of \eqref{state1} and $\vp^{k}$ of \eqref{eq:adjoint} for the triplet $(\Omega^k,\theta^{k},\mA^{k})\in{\mathcal{A'}}$ and $\mM^{k}=\text{Sym} \sum_{j=1}^m \nabla u^{k}_j \otimes \nabla p^{k}_j$.
    \item %Calculate direction $\psi^k\in\HH_0^1(D,\R^d)$ satisfying
 %   \[(\forall \varphi\in\HH_0^1(D,\R^d) )\quad \int_D \nabla\psi^k\cdot\nabla\varphi+\psi^k\varphi\,\mathrm{d}\mx =-\mathcal{L}'(\Omega^k;\varphi). \]
 Calculate the direction $\psi^k$ as the solution of the boundary value problem
 	\[\left\{\begin{array}{ll} -\Delta\psi^k +\psi^k =0&\text{ in }D \\ \psi^k=0 &\text{ on }\partial D. \end{array}\right.\]
   \item Update $\Omega^{k+1}=(\ID +\psi)(\Omega^k).$
\end{itemize}
\end{algorithmm}

\section{Numerical results}\label{sec:num_ex}
Let $D=[-2.6,2.6]^2\setminus B((0,0);1)$ represent a square domain excluding a unit circle centered at the origin. The isotropic conductivities are $\alpha=1$ and $\beta=2$. The initial design $(\Omega_0,\chi_0)$ consists of $\Omega_0$, a shifted square defined as $\Omega_0=[-1.752,1.792]^2\setminus B((0,0);1)$, which represents a square region excluding the unit circle at its center, and $\chi_0=\alpha \mI$.   The boundary $\Gamma_0=\partial B((0,0);1)$ is fixed, while the outer boundary $\Gamma_1$ is allowed to change. Homogeneous Dirichlet boundary conditions are assumed on both boundaries.  The measure of the domain is $|\Omega|=3\pi$ and the amount of the first phase is given by $q_\alpha=\int_\Omega \chi \, \mathrm{d}\mx=\frac{3\pi}{2}.$ 
We consider two multiple state equations \eqref{state1} where right-hand sides are radial functions
\begin{align*}
     f_1(\mr)=\left\{\begin{array}{cc}
             1, & \mr<2 \\
             1-\mr,  &  \mr\geq2
        \end{array} \right.,
    \quad  f_2(\mr)=\left\{\begin{array}{cc}
             \frac{8(2-\mr)^2}{\mr}, & \mr<2 \\
             0,  &  \mr\geq2
        \end{array} \right. 
\end{align*}
and objective functional 
\[ {\JJ}(\Omega,\chi)= -\int_\Omega f_1(\mx) u_1(\mx) +f_2(\mx)u_2(\mx)\, \mathrm{d}\mx. \]

It is known that for fixed annulus $\Omega=B((0,0);2)\setminus \overline{B((0,0);1)}$, the optimal design is radial and classical (see \cite{KunVrd20}). Specifically, the less conductive material is placed along the inner and outer boundaries, while the more conductive material occupies the intermediate region. The interfaces between the materials are circular, with radii
$1.1928$ and $1.7096$.

The results of the hybrid optimization algorithm are shown in Figure \ref{fig:hybrid_results}.
 The numerical simulations were performed on a mesh of approximately 3,500 triangles, with the final result obtained after refinement on a mesh of 53,000 triangles. In all simulations, the measure $|\Omega_k|$ was kept constant using the \texttt{mmg2d\_O3} library \cite{MMG} and the \texttt{mshdist} algorithm \cite{DapFre12}. 

We observe that the sequence of domains $\Omega_k$ converges to the annulus $\Omega$ described  above, with
the final distribution of the two materials matching the theoretical predictions: the vertical red lines are located at $x$-coordinates $1.1928$ and $1.7096$.
%, indicating that the results obtained align with those predicted by theoretical methods. 
The hybrid method was implemented using a single iteration of the homogenization process ($k_H=1$ in Algorithm \ref{alg}), and it is generally recommended to keep this number low (fewer than five) to accelerate computations. Only in the final iteration ($k=k_S=400$ in Algorithm \ref{alg}), once the optimal $\Omega$ has been identified, it is refined using a larger number of homogenization iterations (30 in our case). The corresponding values of the objective functional are presented in Figure \ref{fig:J}.

\begin{figure}[h]
    \centering
    \includegraphics[width=1\textwidth]{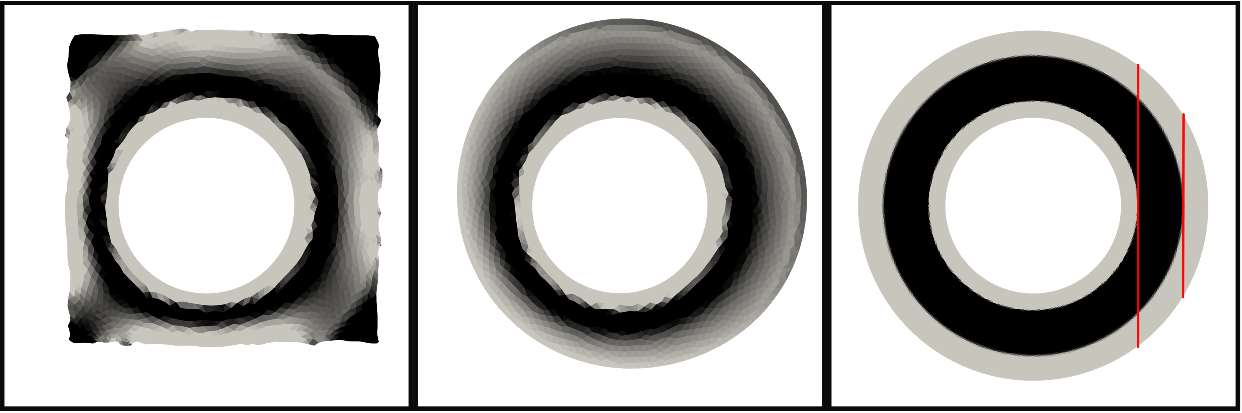}
%    \caption{Results from the hybrid algorithm, local fraction $\theta$ in the $k$-th iteration: $k=1$ left, $k=200$ the middle, 
%$k=400$
%    right. The better conductor is presented by black color, and the worse by grey}
    \caption{Results of the hybrid algorithm: $\Omega$ and local fraction $\theta$ at the $k$-th iteration ($k=1$, left; $k=200$, middle; $k=400$, right). The higher-conductivity phase is shown in black, and the lower-conductivity phase in grey.}
    \label{fig:hybrid_results}
\end{figure}

\begin{figure}[h]
    \centering
    \includegraphics[width=1\textwidth]{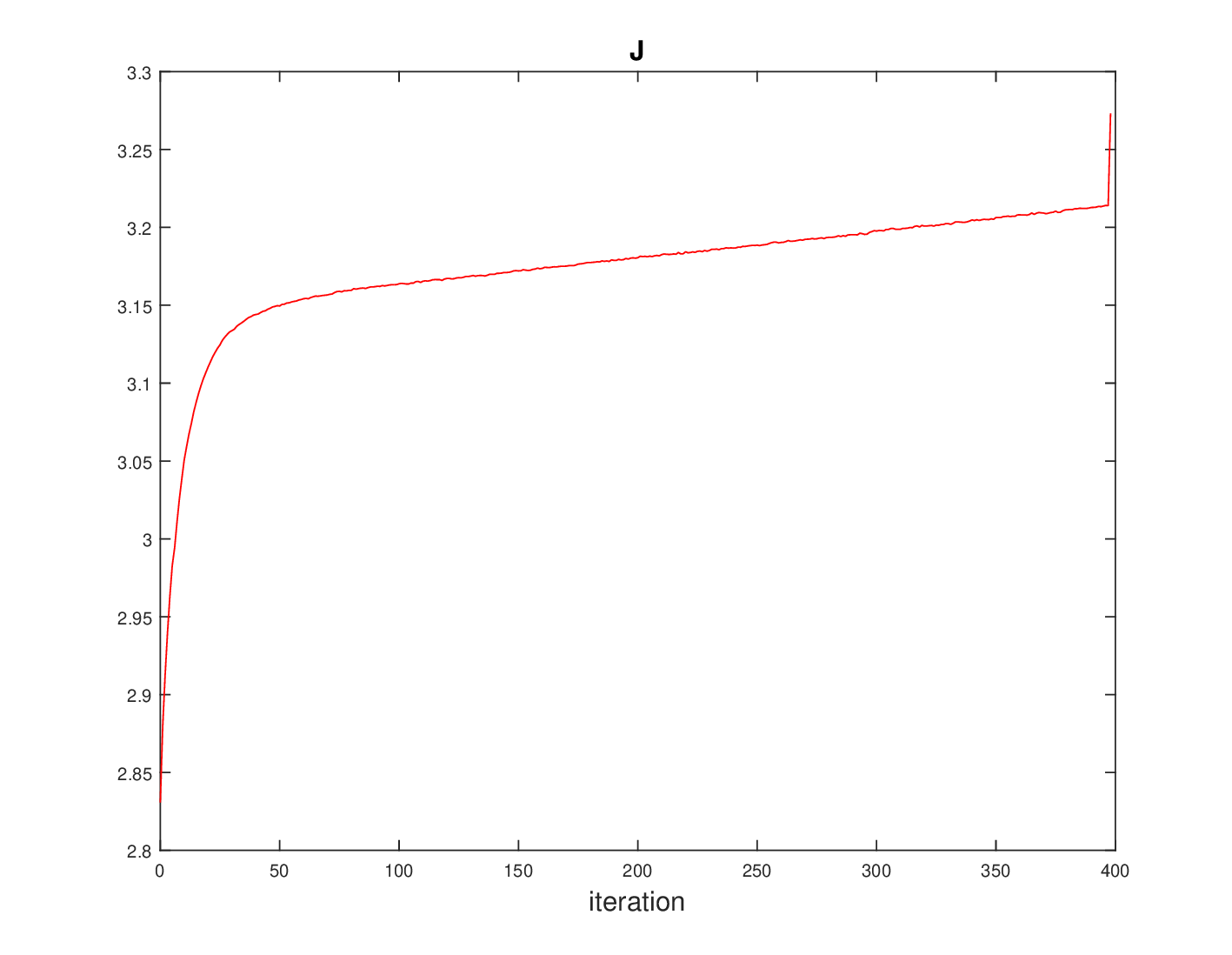}
    \caption{Value of the objective functional. Note the significant jump at the end, which occurs after refinement and 30 additional iterations of the homogenization process.}
    \label{fig:J}
\end{figure}
%In order to test previous algorithm we propose the following numerical test. Let $D=[-2.6,2.6]^2\setminus B((0,0);1)$ be a square without unit ball in center. We assume two multiple state equations $(m=2)$

\section*{Acknowledgements}
The research  has been supported in part by Croatian Science Foundation under the project IP-2022-10-7261. 
The calculations were performed at the Laboratory for Advanced Computing (Faculty of Science, University of Zagreb).

\printbibliography

\end{document}